%%%%%%%%%%%%%%%% Plateau.tex Oct. 23, 2012

%... missing endcsname
%%%%\documentclass[reqno,12pt]{amsbook}
\documentclass[reqno,12pt]{amsart}
\usepackage{txfonts}
\usepackage{mathrsfs}
\usepackage{amsmath}
\usepackage{amssymb}

\usepackage{amsthm}
\usepackage{verbatim}
\usepackage{latexsym, bm}
%\usepackage{diagrams}

%%%%\typesetter{Rong Du}
%%%%\papertype{article}
%\submitformat{latex2e}
\title[On higher dimensional complex Plateau problem]{On higher dimensional complex Plateau problem}

%\keywords{}
%\subjclass{}

\author[Rong Du]{Rong Du$^{\dag}$}
\address{Department of Mathematics\\
Shanghai Key Laboratory of PMMP\\
East China Normal University\\
Rm. 312, Math. Bldg, No. 500, Dongchuan Road\\
Shanghai, 200241, P. R. China} \email{rdu@math.ecnu.edu.cn}

\author[Yun Gao]{Yun Gao$^{\dag\dag}$}

\address{Department of Mathematics, Shanghai Jiao Tong University,
Shanghai 200240, P. R. of China}

\email{gaoyunmath@sjtu.edu.cn}

\author[Stephen Yau]{Stephen Yau$^{\ast}$}

\address{Department of mathematical sciences\\
Tsinghua University\\
Beijing, 100084, P.R.China} \email{yau@uic.edu,
syau@math.tsinghua.edu.cn}

\thanks{$^{\dag}$The Research Sponsored by the National Natural Science Foundation of China (Grant No. 11471116), Science and Technology Commission of Shanghai Municipality (Grant No. 13dz2260400) and Shanghai Pujiang Program (Grant No. 12PJ1402400).
The first author would also like thank Professor N. Mok for
supporting his research when he was in the University of Hong Kong.}
\thanks{$^{\dag\dag}$ The Research Sponsored by the National Natural Science Foundation of China (Grant No. 11271250,11271251) and SMC program of Shanghai Jiao Tong University.}
\thanks{$^{\ast}$ The Research partially supported by Department of Mathematical Sciences, Tsinghua University, Beijing, China.}
\thanks{All the authors are supported by China NSF (Grant No. 11531007)}

%------------------------------- New theorems

% These will be typeset in italics

% These will be typeset in Roman
\theoremstyle{definition}
\newtheorem{theorem}[subsection]{Theorem}
\newtheorem{lemma}[subsection]{Lemma}
\newtheorem{definition}[subsection]{Definition}
\newtheorem{proposition}[subsection]{Proposition}

\newtheorem{corollary}[subsection]{Corollary}
\newtheorem{remark}[subsection]{Remark}
\newtheorem{conjecture}[subsection]{Conjecture}
\newfont{\drnew}{wncyr10}
%------------------------------- macros, if any
%%%%\DeclareMathOperator{\Max}{\textit{max}}
%-------------------------------
\let\tilde=\widetilde

\allowdisplaybreaks
 % changed from \gamma

\def\dashfill{\leaders\hbox{\hbox to 3.25pt{\hrulefill}\hspace*{2pt}\hbox to 3.25pt{\hrulefill}}\hfill}
\newcommand{\CITE}[1]{{[#1]}}
\let\cite=\CITE

\begin{document}

\begin{abstract}
Let $X$ be a compact connected strongly pseudoconvex $CR$ manifold
of real dimension $2n-1$ in $\mathbb{C}^{N}$. It has been an
interesting question to find an intrinsic smoothness criteria for
the complex Plateau problem. For $n\ge 3$ and $N=n+1$, Yau found a
necessary and sufficient condition for the interior regularity of
the Harvey-Lawson solution to the complex Plateau problem by means
of Kohn--Rossi cohomology groups on $X$ in 1981. For $n=2$ and $N\ge
n+1$, the first and third authors introduced a new CR invariant
$g^{(1,1)}(X)$ of $X$. The vanishing of this invariant will give the
interior regularity of the Harvey-Lawson solution up to
normalization. For $n\ge 3$ and $N>n+1$, the problem still remains
open. In this paper, we generalize the invariant $g^{(1,1)}(X)$ to
higher dimension as $g^{(\Lambda^n 1)}(X)$ and show that if
$g^{(\Lambda^n 1)}(X)=0$, then the interior has at most finite
number of rational singularities. In particular, if $X$ is Calabi--Yau of real
dimension $5$, then the vanishing of this invariant is equivalent to
give the interior regularity up to normalization.
\end{abstract}

\maketitle

\vspace{1cm}
\section{\textbf{Introduction}}
One of the natural fundamental questions of complex geometry is to
study the boundaries of complex varieties. For example, the famous
classical complex Plateau problem asks which odd dimensional real
sub-manifolds of $\mathbb{C}^N$ are boundaries of complex
sub-manifolds in $\mathbb{C}^N$. In their beautiful seminal paper,
Harvey and Lawson \cite{Ha-La} proved that for any compact
connected $CR$ manifold $X$ of real dimension $2n-1$, $n\ge 2$, in
$\mathbb{C}^N$, there is a unique complex variety $V$ in
$\mathbb{C}^N$ such that the boundary of $V$ is $X$. In fact, Harvey
and Lawson proved the following theorem.

\vspace{.5cm} \textbf{Theorem} (Harvey-Lawson [Ha-La], [Ha-La2])
Let $X$ be an embeddable strongly pseudoconvex $CR$ manifold. Suppose that $X$ is contained in the boundary of a
strongly pseudoconvex bounded domain in $\mathbb{C}^{N}$. Then
$X$ can be $CR$ embedded in some $\mathbb{C}^{N}$ and $X$
bounds a Stein variety $V \subseteq \mathbb{C}^{N}$ with at
most isolated singularities.

The above theorem is one of the important theorems in complex
geometry. It relates theory of strongly pseudoconvex $CR$ manifolds
on the one hand and the theory of isolated normal singularities on
the other hand.

The next fundamental question is to determine when $X$ is the boundary
of a complex sub-manifold in $\mathbb{C}^N$, i.e., when $V$ is
smooth.  In 1981, Yau [Ya] solved this problem for the case $n\ge 3$
by calculation of Kohn--Rossi cohomology groups $H^{p, q}_{K R}(X)$.
More precisely, suppose $X$ is a compact connected strongly
pseudoconvex $CR$ manifold of real dimension $2n-1$, $n\ge 3$, in
the boundary of a bounded strongly pseudoconvex domain $D$ in
$\mathbb{C}^{n+1}$. Then $X$ is the boundary of a complex
sub-manifold $V\subset D-X$ if and only if Kohn--Rossi cohomology
groups $H^{p, q}_{K R}(X)$ are zeros for $1\le q\le n-2$ (see
Theorem \ref{Yau Pla}).

For $n=2$, i.e. $X$ is a 3-dimensional $CR$ manifold, the intrinsic
smoothness criteria for the complex Plateau problem remains unsolved
for over a quarter of a century even for the hypersurface case. The
main difficulty is that the Kohn--Rossi cohomology groups are
infinite dimensional in this case. Let $V$ be a complex variety with
$X$ as its boundary. Then the singularities of $V$ are surface
singularities. In \cite{Du-Ya}, the first and the third authors
introduced a new $CR$ invariant $g^{(1,1)}(X)$ to solve the
regularity problem of the Harvey-Lawson solution to the complex
Plateau problem. More precisely, they showed that if $X$ is a
strongly pseudoconvex compact Calabi--Yau $CR$ manifold of dimension
$3$ and $X$ is contained in the boundary of a strongly pseudoconvex
bounded domain $D$ in $\mathbb{C}^N$ with holomorphic De Rham
cohomology $H^2_h(X)=0$, then $X$ is the boundary of a complex
sub-manifold up to normalization $V\subset D-X$ with boundary
regularity if and only if $g^{(1,1)}(X)=0$. In particular, if $N=3$,
then $X$ is the boundary of a complex sub-manifold $V\subset D-X$ if
and only if $g^{(1,1)}(X)=0$.

For $n\ge 3$ and $N>n+1$, i.e., non-hypersurface type, the complex
Plateau problem still remains open. In this paper, we generalize the
invariant $g^{(1,1)}(X)$ to higher dimension as $g^{(\Lambda^n
1)}(X)$ and show that if $g^{(\Lambda^n 1)}(X)=0$, then the interior
which $X$ bounds has at most finite number of rational singularities. In
particular, if $X$ is Calabi--Yau of real dimension $5$, i.e.,
$n=3$,  then the vanishing of this invariant is equivalent to give
the interior regularity.

\vspace{.5cm}\textbf{Theorem A} \emph{Let $X$ be a strongly
pseudoconvex compact $CR$ manifold of dimension $2n-1$, where $n>2$.
Suppose that $X$ is contained in the boundary of a strongly
pseudoconvex bounded domain $D$ in $\mathbb{C}^N$. Then $X$ is the
boundary of a variety $V\subset D-X$ with boundary regularity and
the number of non-rational singularities (up to normalization) is
not great than $g^{(\Lambda^n 1)}(X)$. In particular,  if
$g^{(\Lambda^n 1)}(X)=0$, then $V$ has at most finite number of rational singularities.}

\vspace{.5cm}\textbf{Theorem B} \emph{Let $X$ be a strongly
pseudoconvex compact Calabi-Yau $CR$ manifold of dimension $5$.
Suppose that $X$ is contained in the boundary of a strongly
pseudoconvex bounded domain $D$ in $\mathbb{C}^N$. Then $X$ is the
boundary of a complex sub-manifold (up to normalization) $V\subset
D-X$ with boundary regularity if and only if $g^{(\Lambda^3
1)}(X)=0$.}

\vspace{.5cm} In Section 2, we shall recall the definition
Kohn--Rossi cohomology and holomorphic De Rham cohomology for a $CR$
manifold. In Section 3, we survey some known results about the
conjecture of minimal discrepancy and properties of terminal and
rational Gorenstein $3$-folds singularities. In Section 4, we
generalize the invariant of singularities $g^{(1,1)}$ to higher
dimension as $g^{(\Lambda^n 1)}$ and and study some properties of
$g^{(\Lambda^n 1)}$. In Section 5, we use the results in section 4
to solve our main theorems in this paper.

\section{\textbf{Strongly pseudoconvex CR manifolds}}

Kohn--Rossi cohomology was first introduced by Kohn--Rossi. Follow-
ing Tanaka \cite{Ta}, we reformulate the definition in a way
independent of the interior manifold.

\begin{definition}
\emph{Let $X$ be a connected orientable manifold of real dimension
$2n-1$. A $CR$ structure on $X$ is an $(n-1)$-dimensional subbundle
$S$ of $\mathbb{C}T(X)$ (complexified tangent bundle) such that
\begin{enumerate}
\item
$S\bigcap \bar{S}=\{0\}$,
\item
If $L$, $L'$ are local sections of $S$, then so is $[L, L']$.
\end{enumerate}}
\end{definition}

Such a manifold with a $CR$ structure is called a $CR$ manifold.
There is a unique subbundle $\mathcal{H}$ of $T(X)$ such that
$\mathbb{C}\mathcal{H}=S\bigoplus \bar{S}$. Furthermore, there is a
unique homomorphism $J$ : $\mathcal{H}\longrightarrow\mathcal{H}$
such that $J^2=-1$ and $S=\{v-iJv : v\in\mathcal{H}\}$. The pair
$(\mathcal{H}, J)$ is called the real expression of the $CR$
structure.

Let $X$ be a $CR$ manifold with structure $S$. For a complex valued
$C^\infty$ function $u$ defined on $X$, the section
$\bar{\partial}_bu\in \Gamma(\bar{S}^*)$ is defined by
$$\bar{\partial}_bu(\bar{L})=\bar{L}(u), L\in S.$$
The differential operator $\bar{\partial}_b$ is called the
(tangential) Cauchy-Riemann operator, and a solution $u$ of the
equation $\bar{\partial}_bu=0$ is called a holomorphic function.

\begin{definition}\label{hvb on X}
\emph{A complex vector bundle E over X is said to be holomorphic if
there is a differential operator
\[
\bar{\partial}_E: \Gamma(E)\longrightarrow \Gamma(E\otimes
\bar{S}^{*})
\]
satisfying the following conditions:
\begin{enumerate}
\item
$\bar{\partial}_E(fu)(\bar{L}_1)=(\bar{\partial}_bf)(\bar{L}_1)u+f(\bar{\partial}_Eu)(\bar{L}_1)
=(\bar{L}_1f)u+f(\bar{\partial}_Eu)(\bar{L}_1)$,
\item
$(\bar{\partial}_Eu)[\bar{L}_1,
\bar{L}_2]=\bar{\partial}_E(\bar{\partial}_Eu(\bar{L}_2))(\bar{L}_1)-\bar{\partial}_E(\bar{\partial}_Eu(\bar{L}_1))(\bar{L}_2)$,
\end{enumerate}
where $u\in \Gamma(E)$, $f\in C^\infty(X)$ and $L_1$, $L_2 \in
\Gamma(S)$.}
\end{definition}

The operator $\bar{\partial}_E$ is called the Cauchy-Riemann
operator and a solution $u$ of the equation $\bar{\partial}_Eu=0$ is
called a holomorphic cross section.

A basic holomorphic vector bundle over a CR manifold X is the vector
bundle $\widehat{T}(X)=\mathbb{C}T(X)/\bar{S}$. The corresponding
operator $\bar{\partial}=\bar{\partial}_{\widehat{T}(X)}$ is defined
as follows. Let $p$ be the projection from $\mathbb{C}T(X)$ to
$\widehat{T}(X)$. Take any $u \in \Gamma(\widehat{T}(X))$ and
express it as $u=p(Z)$, $Z \in \Gamma(\mathbb{C}T(X))$. For any
$L\in \Gamma(S)$, define a cross section
$(\bar{\partial}u)(\bar{L})$ of $\widehat{T}(X)$ by
$(\bar{\partial}u)(\bar{L})= p([\bar{L},Z])$. One can show that
$(\bar{\partial}u)(\bar{L})$ does not depend on the choice of $Z$
and that $\bar{\partial}u$ gives a cross section of
$\widehat{T}(X)\otimes \bar{S}^{*}$. Furthermore one can show that
the operator $u\longmapsto \bar{\partial}u$ satisfies $(1)$ and
$(2)$ of Definition \ref{hvb on X}, using the Jacobi identity in the
Lie algebra $\Gamma(\mathbb{C}T(X))$. The resulting holomorphic
vector bundle $\widehat{T}(X)$ is called the holomorphic tangent
bundle of $X$.

If $X$ is a real hypersurface in a complex manifold $M$, we may
identify $\widehat{T}(X)$ with the holomorphic vector bundle of all
$(1,0)$ tangent vectors to $M$ and $\widehat{T}(X)$ with the
restriction of $\widehat{T}(M)$ to $X$. In fact, since the structure
$S$ of $X$ is the bundle of all $(1,0)$ tangent vectors to $X$, the
inclusion map $\mathbb{C}T(X)\longrightarrow\mathbb{C}T(M)$ induces
a natural map
$\widehat{T}(X)\xrightarrow[\phantom{ttttt}]{\phi}\widehat{T}(M)|_X$
which is a bundle isomorphism satisfying
$\bar{\partial}(\phi(u))(\bar{L})=\phi(\bar{\partial}u(\bar{L}))$,
$u\in\Gamma(\widehat{T}(X))$, $L\in S$.

For a holomorphic vector bundle $E$ over $X$, set
\[
C^q(X, E)=E\otimes \wedge^q\bar{S}^{*}, \mathscr{C}^q(X,
E)=\Gamma(C^q(X, E))
\]
and define a differential operator
\[
\bar{\partial}^q_E : \mathscr{C}^q(X, E)\longrightarrow
\mathscr{C}^{q+1}(X, E)
\]
by
\[
(\bar{\partial}^q_E\phi)(\bar{L}_1, \dots , \bar{L}_{q+1})
=\sum_i(-1)^{i+1}\bar{\partial}_E(\phi(\bar{L}_1, \dots ,
\widehat{\bar{L}_i}, \dots , \bar{L}_{q+1}))(\bar{L}_i)
\]
\[
+\sum_{i<j}(-1)^{i+j}\phi([\bar{L}_i, \bar{L}_j], \bar{L}_1, \dots ,
\widehat{\bar{L}_i}, \dots , \bar{L}_{q+1})
\]
for all $\phi\in\mathscr{C}^q(X, E)$ and $L_1, \dots ,
L_{q+1}\in\Gamma(S)$. One shows by standard arguments that
$\bar{\partial}^q_E\phi$ gives an element of $\mathscr{C}^{q+1}(X,
E)$ and that $\bar{\partial}^{q+1}_E\bar{\partial}^q_E=0$. The
cohomology groups of the resulting complex $\{\mathscr{C}^{q}(X, E),
\bar{\partial}^{q}_E \}$ is denoted by $H^q(X, E)$.

Let $\{\mathscr{A}^k(X), d\}$ be the De Rham complex of $X$ with
complex coefficients, and let $H^k(X)$ be the De Rham cohomology
groups. There is a natural filtration of the De Rham complex, as
follows. For any integer $p$ and $k$, put
$A^k(X)=\wedge^k(\mathbb{C}T(X)^{*})$ and denote by $F^p(A^k(X))$
the subbundle of $A^k(X)$ consisting of all $\phi\in A^k(X)$ which
satisfy the equality
\[
\phi(Y_1, \dots, Y_{p-1}, \bar{Z}_1, \dots, \bar{Z}_{k-p+1})=0
\]
for all $Y_1, \dots, Y_{p-1} \in \mathbb{C}T(X)_x$ and $Z_1, \dots,
Z_{k-p+1}\in S_x$, $x\in X$. Then
\[
A^k(X)=F^0(A^k(X))\supset F^1(A^k(X)) \supset \cdots
\]
\[
\supset F^k(A^k(X)) \supset F^{k+1}(A^k(X))=0.
\]
Setting $F^p(\mathscr{A}^k(X))=\Gamma(F^p(A^k(X)))$, we have
\[
\mathscr{A}^k(X)=F^0(\mathscr{A}^k(X))\supset
F^1(\mathscr{A}^k(X))\supset \cdots
\]
\[
\supset F^k(\mathscr{A}^k(X))\supset F^{k+1}(\mathscr{A}^k(X))=0.
\]
Since clearly $dF^p(\mathscr{A}^k(X))\subseteq
F^p(\mathscr{A}^{k+1}(X))$, the collection
$\{F^p(\mathscr{A}^k(X))\}$ gives a filtration of the De Rham
complex.

We denote by $H^{p, q}_{KR}(X)$ the groups $E^{p, q}_1(X)$ of the
spectral sequence $\{E^{p, q}_r(X)\}$ associated with the filtration
$\{F^p(\mathscr{A}^k(X))\}$. We call $H^{p, q}_{KR}(X)$ the
Kohn--Rossi cohomology group of type $(p, q)$. More explicitly, let
\[
A^{p, q}(X)=F^p(A^{p+q}(X)), \mathscr{A}^{p, q}(X)=\Gamma(A^{p,
q}(X)),
\]
\[
C^{p, q}(X)=A^{p, q}(X)/A^{p+1, q-1}(X),  \mathscr{C}^{p,
q}(X)=\Gamma(C^{p, q}(X)).
\]
Since $d : \mathscr{A}^{p, q}(X) \longrightarrow \mathscr{A}^{p,
q+1}(X)$ maps $\mathscr{A}^{p+1, q-1}(X)$ into $\mathscr{A}^{p+1,
q}(X)$, it induces an operator $d'': \mathscr{C}^{p,
q}(X)\longrightarrow \mathscr{C}^{p, q+1}(X)$. $H^{p, q}_{KR}(X)$
are then the cohomology groups of the complex $\{\mathscr{C}^{p,
q}(X), d''\}$.

Alternatively $H^{p, q}_{KR}(X)$ may be described in terms of the
vector bundle $E^p=\wedge^p(\widehat{T}(X)^{*})$. If for $\phi\in
\Gamma(E^p)$, $u_1,\dots, u_p\in \Gamma(\widehat{T}(X))$, $Y\in S$,
we define $(\bar{\partial}_{E^p}\phi)(\bar{Y})=\bar{Y}\phi$ by
\[
\bar{Y}\phi(u_1,\dots, u_p)=\bar{Y}(\phi(u_1,\dots,
u_p))+\sum_i(-1)^i\phi(\bar{Y}u_i, u_1,\dots, \widehat{u_i},\dots,
u_p)
\]
where $\bar{Y}u_i=(\bar{\partial}_{\widehat{T}(X)}u_i)(\bar{Y})$,
then we easily verify that $E^p$ with $\bar{\partial}_{E^p}$ is a
holomorphic vector bundle. Tanaka \cite{Ta} proves that $C^{p,q}(X)$
may be identified with $C^q(X, E^p)$ in a natural manner such that
\[
d''\phi=(-1)^p\bar{\partial}_{E^p}\phi, \phi \in \mathscr{C}^{p,
q}(X).
\]
Thus, $H^{p, q}_{KR}(X)$ may be identified with $H^q(X, E^p)$.

We denote by $H^k_h(X)$ the groups $E^{k,0}_2(X)$ of the spectral
sequence $\{E^{p,q}_r(X)\}$ associated with the filtration
$\{F^p(\mathscr{A}^k(X))\}$. We call $H^k_h(X)$ the holomorphic De
Rham cohomology groups. The groups $H^k_h(X)$ are the cohomology
groups of the complex $\{\mathscr{S}^k(X), d\}$, where we put
$\mathscr{S}^k(X)=E^{k,0}_1(X)$ and $d=d_1 :
E^{k,0}_1\longrightarrow E^{k+1,0}_1$. Recall that
$\mathscr{S}^k(X)$ is the kernel of the following mapping:
\[
d_0: E^{k, 0}_0=F^k\mathscr{A}^k=\mathscr{A}^{k,0}(X)\hspace{4cm}
\]
\[
\hspace{3cm}\rightarrow
E^{k,1}_0=F^k\mathscr{A}^{k+1}/F^{k+1}\mathscr{A}^{k+1}=\mathscr{A}^{k,1}(X)/\mathscr{A}^{k+1,0}.
\]
Note that $\mathscr{S}$ may be characterized as the space of
holomorphic $k$-forms, namely holomorphic cross sections of $E^k$.
Thus the complex $\{\mathscr{S}^k(X), d\}$ (respectively, the groups
$H^k_h(X)$ will be called the holomorphic De Rham complex
(respectively, the holomorphic De Rham cohomology groups).

\begin{definition}
\emph{Let $L_1,\dots, L_{n-1}$ be a local frame of the $CR$
structure $S$ on $X$ so that $\bar{L}_1,\dots,\bar{L}_{n-1}$ is a
local frame of $\bar{S}$. Since $S\oplus  \bar{S}$ has complex
codimension one in $\mathbb{C}T(X)$, we may choose a local section N
of $\mathbb{C}T(X)$ such that $L_1,\dots, L_{n-1},
\bar{L}_1,\dots,\bar{L}_{n-1}$, $N$ span $\mathbb{C}T(X)$. We may
assume that $N$ is purely imaginary. Then the matrix $(c_{ij})$
defined by
\[
[L_i,
\bar{L}_j]=\sum_ka^k_{i,j}L_k+\sum_kb^k_{i,j}\bar{L}_k+c_{i,j}N
\]
is Hermitian, and is called the Levi form of $X$.}
\end{definition}
\begin{proposition}
\emph{The number of non-zero eigenvalues and the absolute value of
the signature of $(c_{ij})$ at each point are independent of the
choice of $L_1,\dots, L_{n-1}, N$.}
\end{proposition}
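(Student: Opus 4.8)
The plan is to reduce the statement to Sylvester's law of inertia by showing that the Levi forms attached to two different admissible frames are related by Hermitian congruence followed by multiplication by a nonzero \emph{real} scalar; the conclusion is then pure pointwise linear algebra.

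First I would set up the change of frame. If $L_1,\dots,L_{n-1}$ and $L'_1,\dots,L'_{n-1}$ are two local frames of $S$, there is an invertible matrix $P=(p_{ij})$ of smooth functions with $L'_i=\sum_j p_{ij}L_j$, hence $\bar L'_i=\sum_j \bar p_{ij}\bar L_j$. Since $\mathbb{C}T(X)/(S\oplus\bar S)$ is a line bundle generated by the class of $N$, any other admissible section $N'$ satisfies $N'\equiv\lambda N \pmod{S\oplus\bar S}$ for a nowhere-zero smooth function $\lambda$. I would record the elementary but essential point that $\lambda$ is real: conjugating this congruence, using $\overline{S\oplus\bar S}=S\oplus\bar S$ together with $\bar N=-N$ and $\bar{N'}=-N'$ (both $N$ and $N'$ being purely imaginary), forces $\lambda=\bar\lambda$.

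Next I would compute the new Levi form. By the Leibniz rule for Lie brackets,
\[
[L'_i,\bar L'_j]=\sum_{a,b}p_{ia}\bar p_{jb}\,[L_a,\bar L_b]\;+\;\bigl(\text{terms in }S\oplus\bar S\bigr),
\]
the extra terms arising from differentiating the coefficients $p_{ia},\bar p_{jb}$ and therefore lying in $S\oplus\bar S$. Substituting the defining relation $[L_a,\bar L_b]=\sum_k a^k_{ab}L_k+\sum_k b^k_{ab}\bar L_k+c_{ab}N$, then re-expressing $N\equiv\lambda^{-1}N' \pmod{S\oplus\bar S}$ and the $L_k$ in terms of the $L'_k$, one sees that the only contribution to the $N'$-component is $\lambda^{-1}\sum_{a,b}p_{ia}c_{ab}\bar p_{jb}$. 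Hence the Levi form $C'=(c'_{ij})$ in the primed frame satisfies $C'=\lambda^{-1}PCP^{*}$ with $P^{*}=\overline{P}^{\,T}$; in particular $C'$ is again Hermitian, so that its eigenvalues are real and its signature is defined.

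Finally I would invoke Sylvester's law of inertia. Since $P$ is invertible, $C$ and $PCP^{*}$ have the same inertia $(n_+,n_-,n_0)$; multiplication by the real scalar $\lambda^{-1}$ either preserves this triple (if $\lambda>0$) or interchanges $n_+$ and $n_-$ (if $\lambda<0$). In either case the rank $n_++n_-$, i.e. the number of non-zero eigenvalues, and $|n_+-n_-|$, i.e. the absolute value of the signature, are unchanged at each point, which is exactly the assertion. The only steps needing any care are the reality of $\lambda$ and the bookkeeping showing that the coefficient-derivative terms do not affect the $N$-component; neither presents a real obstacle.
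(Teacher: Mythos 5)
Your proof is correct and is exactly the standard argument: the paper states this proposition without proof (following Tanaka), and the expected justification is precisely the congruence relation $C'=\lambda^{-1}PCP^{*}$ with $\lambda$ real and nowhere zero, followed by Sylvester's law of inertia. Both of the delicate points --- the reality of $\lambda$ via $\bar N=-N$, and the fact that the derivative-of-coefficient terms in $[L'_i,\bar L'_j]$ land in $S\oplus\bar S$ and so do not affect the $N'$-component --- are handled correctly.
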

\begin{definition}
\emph{$X$ is said to be strongly pseudoconvex if the Levi form is
positive definite at each point of $X$.}
\end{definition}

\begin{definition}
\emph{Let $X$ be a CR manifold of real dimension $2n-1$. $X$ is said
to be Calabi-Yau if there exists a nowhere vanishing holomorphic
section in $\Gamma(\wedge^n\widehat{T}(X)^*)$, where
$\widehat{T}(X)$ is the holomorphic tangent bundle of $X$.}
\end{definition}

\textbf{Remark}:
\begin{enumerate}
\item
Let $X$ be a $CR$ manifold of real dimension $2n-1$ in
$\mathbb{C}^n$. Then $X$ is a Calabi-Yau $CR$ manifold.
\item
Let $X$ be a strongly pseudoconvex $CR$ manifold of real dimension
$2n-1$ contained in the boundary of bounded strongly pseudoconvex
domain in $\mathbb{C}^{n+1}$. Then $X $ is a Calabi-Yau $CR$
manifold.
\end{enumerate}
The proof of these two statements is essentially the fact that any hypersurface singularities are Gorenstein and with the same arguments as Lemma \ref{boundary} to get  a nowhere vanishing holomorphic
section of the holomorphic tangent bundle of $X$.

\section{\textbf{Minimal discrepancy and 3-dimensional canonical Gorenstein singularities}}
Canonical singularities appear as singularities of the canonical
model of a projective variety, and terminal singularities are
special cases that appear as singularities of minimal models. They
were introduced by Reid in 1980 (\cite{Re}). Terminal singularities
are important in the minimal model program because smooth minimal
models do not always exist, and thus one must allow certain
singularities, namely the terminal singularities.

Suppose that $X$ is a normal variety such that its canonical class
$K_X$ is $\mathbb{Q}$-Cartier, and let $f:Y\rightarrow X$ be a
resolution of the singularities of $X$. Then
\[K_Y=f^*K_X+\sum_ia_iE_i,\]
where the sum is over the irreducible exceptional divisors, and the
$a_i$ are rational numbers, called the discrepancies.

Then the singularities of $X$ are called:
\begin{equation} \label{defsing}
\left\{ \begin{split}
         &\text{terminal}& a_i>0 &&\text{for all i}\\
         &\text{canonical}& a_i\ge 0 &&\text{for all i} \\
         &\text{log-terminal} & a_i > -1 &&\text{for all i} \\
         &\text{log-canonical} & a_i \ge -1 &&\text{for all i.}
                          \end{split} \right.
                          \end{equation}
\begin{definition}
The minimal discrepancy of a variety $X$ at $0$, denoted by
$Md_0(X)$ (or $Md(X)$ for short), is the minimum of all
discrepancies of discrete valuations of $\mathbb{C}(X)$, whose
center on $X$ is $0$.
\end{definition}
\begin{remark}
The minimal discrepancy only exists when $X$ has log-canonical
singularities (see, e.g. \cite{C-K-M}). Whenever $Md(X)$ exists it
is at least $-1$.
\end{remark}

Shokurov conjecture that the minimal discrepancy is bounded above in
term of the dimension of a variety.

\begin{conjecture}\label{co} (Shokurov \cite{Sh}): The minimal
discrepancy $Md_0(X)$ of a variety $X$ at $0$ of dimension $n$ is at
most $n-1$. Moreover, if $Md_0(X)=n-1$, then $(X, 0)$ is
nonsingular.
\end{conjecture}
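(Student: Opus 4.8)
This is a well-known conjecture of Shokurov, proven unconditionally only in special cases --- dimension $\le 3$, local complete intersections, and toric singularities --- so what follows is a strategy together with an indication of where it is known to succeed and where it breaks down. Since $Md_0(X)$ exists only when $(X,0)$ is log canonical, I assume throughout that $(X,0)$ is log canonical with $K_X$ being $\mathbb Q$-Cartier, and I treat the bound $Md_0(X)\le n-1$ and the rigidity statement ``$Md_0(X)=n-1$ implies $(X,0)$ smooth'' as two separate tasks.

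For the bound it is enough to produce a single divisorial valuation $E$ over $0$ with log discrepancy $a(E;X)+1\le n$. The natural candidate is a component, lying over $0$, of the exceptional divisor of the normalized blow-up $\mu\colon X'\to X$ of the point $0$ (that is, of its maximal ideal). On a log resolution dominating $\mu$ one would compute $a(E;X)$ and bound its distance below $n-1$ by a nonnegative defect measuring how far $\mu$ is from the blow-up of a smooth point; this defect is governed by the Hilbert--Samuel multiplicity $e(\mathcal O_{X,0})\ge 1$, which equals $1$ exactly when $X$ is regular at $0$. (When $X$ is a local complete intersection this is immediate from the arc-space formula below, whose $m=0$ term already gives $Md_0(X)\le n-1$.)

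For the rigidity statement I would argue contrapositively: assuming $(X,0)$ singular, exhibit a valuation over $0$ of log discrepancy $<n$. Two routes are available. The \emph{classification route} is the one used in low dimension: for $n=2$ one runs through the classification of log canonical surface singularities (du Val points, cyclic quotients, simple elliptic and cusp singularities and their quotients) and reads $Md_0$ off the minimal resolution, obtaining $Md_0\le 0$ whenever $(X,0)$ is not smooth; for $n=3$ one uses the structure results recalled in Section 3 --- Reid's description of Gorenstein canonical points as compound du Val, Mori's classification of terminal points, and the corresponding higher-index and log canonical refinements --- to get $Md_0\le 1$. The \emph{arc-space route} applies when $(X,0)$ is a local complete intersection: by the jet-scheme description of minimal log discrepancies of such singularities (Mustata; Ein--Mustata--Yasuda), $Md_0(X)+1$ is the infimum over $m\ge 0$ of $(m+1)n-\dim J_m(X)_0$, where $J_m(X)_0$ is the fibre of the $m$-jet scheme over $0$; the $m=1$ term is $2n-\dim T_0X$, and $\dim T_0X=\operatorname{emdim}_0X>n$ precisely when $X$ is singular at $0$, so $Md_0(X)\le n-2<n-1$. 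In the toric case one replaces this by the explicit combinatorial formula for minimal log discrepancies of toric singularities. Combined with the first step, either route gives the dichotomy.

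The main obstacle is dimension $\ge 4$ outside the local complete intersection and toric classes: there is no classification of log canonical singularities, and the standard birational and motivic tools do not compute $Md_0$ at a point for an arbitrary singular $X$. The most natural inductive device --- cutting by a general hyperplane section $H\ni 0$, which does lower the embedding dimension and hence keeps $(H,0)$ singular --- fails on two counts. First, on a common log resolution one only obtains $Md_0(X)\ge Md_0(H)+1$, which is the wrong direction for an induction on dimension. Second, a general hyperplane section through a point need not remain log canonical (cones over abelian varieties being the standard example), so the induction cannot even be set up. A controlled restriction theorem for $Md_0$ through a point, or a resolution-free proof of the rigidity, is essentially what the conjecture demands.
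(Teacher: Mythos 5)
This statement is labelled a \emph{conjecture} in the paper (attributed to Shokurov), and the paper offers no proof of it: it only records that the conjecture is known for surfaces \cite{Al}, for $3$-folds via the classification of Gorenstein terminal singularities \cite{Re2} together with \cite{E-M-Y} or \cite{Ma}, and for local complete intersections \cite{E-M}, \cite{E-M-Y}. You correctly recognize that the general statement is open and do not claim a proof, and the special cases you substantiate (dimension $\le 3$ by classification, the lci case via the jet-scheme formula with the $m=0$ and $m=1$ terms giving the bound and the rigidity respectively) are exactly the cases the paper cites, so your account is consistent with, and somewhat more detailed than, what the paper asserts.
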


The conjecture was confirmed for surfaces (\cite{Al}) and
$3$-dimensional singularities after the explicit classification
(\cite{Re2}) of Gorenstein terminal $3$-fold singularities with
\cite{E-M-Y} or \cite{Ma}. If $X$ is a local complete intersection,
then the conjecture also holds (see \cite{E-M} and \cite{E-M-Y}).

In this paper, we are going to consider the $3$-dimensional
singularities. Mori (\cite{Mo}), Cutkosky (\cite{Cu}) and Brenton
(\cite{Br}) had some classification theorems about special $3$-folds
singularities. The following two theorems will be used to prove our
main theorems.
\begin{theorem}\label{ratGor}(\cite{Re} Corollary 2.14, Corollary 2.12)
Let $0\in V$ be a $3$-dimensional rational Gorenstein singularity,
then there exists a partial resolution $\pi:\tilde{V}\rightarrow V$
such that $\pi^{-1}\{0\}$ is a union of nonsingular rational or
elliptic ruled surfaces, $\tilde{V}$ only has terminal singularities
and $K_{\tilde{V}}=\pi^*K_V$.
\end{theorem}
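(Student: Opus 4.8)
The plan is to construct $\pi$ as a crepant partial resolution of $V$ with terminal singularities and then to identify its exceptional fibre. Since $V$ is Gorenstein, for the germ $0\in V$ the properties ``rational'' and ``canonical'' coincide (Elkik, Flenner; cf. \cite{Re}); in particular $K_V$ is Cartier, and for any resolution $f\colon W\to V$ the discrepancies in $K_W=f^{*}K_V+\sum_i a_iE_i$ are nonnegative integers, the crepant ones being those with $a_i=0$.

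To obtain $\pi$, run the relative minimal model program for $W$ over $V$: in dimension three the requisite divisorial contractions and flips exist and flips terminate (Mori, Shokurov; cf. \cite{C-K-M}), so one gets a projective birational morphism $\pi\colon\tilde V\to V$ with $\tilde V$ having $\mathbb{Q}$-factorial terminal singularities and $K_{\tilde V}$ nef over $V$. Because $V$ is canonical, $D:=K_{\tilde V}-\pi^{*}K_V$ is an effective $\pi$-exceptional divisor, and it is $\pi$-nef since $K_{\tilde V}$ is $\pi$-nef while $\pi^{*}K_V$ is $\pi$-numerically trivial; applying the negativity lemma to $-D$ forces $D\le0$, hence $D=0$ and $K_{\tilde V}=\pi^{*}K_V$. (Alternatively one may invoke Reid's own ``canonical blow-downs'', which antedate the general $3$-fold program.) This already yields that $\tilde V$ has only terminal singularities and that $\pi$ is crepant; note that $\tilde V$ is Gorenstein, as $K_{\tilde V}=\pi^{*}K_V$ is Cartier.

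It remains to analyse $E:=\pi^{-1}(0)$, whose irreducible components are exactly the prime $\pi$-exceptional divisors, all crepant. Since $E$ is Cartier in the Gorenstein $3$-fold $\tilde V$ and $\omega_{\tilde V}|_E\cong\mathcal O_E$ (because $\pi(E)$ is a point), adjunction gives the clean identity $\omega_E\cong N_{E/\tilde V}$, and similarly on each component where it is Cartier. Being effective, nonzero and $\pi$-exceptional, $E$ cannot be $\pi$-nef (the negativity lemma would force $E\le0$), so some curve $C$ in a component $E_i$ has $E_i\cdot C<0$, i.e. $\deg(\omega_{E_i}|_C)<0$. On the other hand $V$ and $\tilde V$ are rational singularities, whence $R^{i}\pi_{*}\mathcal O_{\tilde V}=0$ for $i>0$, so $H^{i}(E,\mathcal O_{E})=0$ for $i>0$; combined with the decomposition sequence $0\to\mathcal O_{E}\to\bigoplus_i\mathcal O_{E_i}\to\mathcal Q\to0$ with $\mathcal Q$ supported in dimension $\le1$, this gives $H^{2}(E_i,\mathcal O_{E_i})=0$ for every component. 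These facts already exclude K3 and abelian components ($H^{2}\ne0$), exclude components whose minimal model has torsion canonical class, namely Enriques and bielliptic (a torsion $\omega_{E_i}$ would have degree $0$ on every curve), and exclude minimal ruled surfaces over a base of genus $\ge2$ (nefness of $-K$ would fail, since $(-K)^{2}=8(1-g)<0$); so each component is a rational or elliptic ruled surface, possibly with singularities where $E$ meets a terminal point of $\tilde V$, and Reid's classification of canonical Gorenstein $3$-fold singularities shows no such singularities occur, giving the claim.

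The hard part is this last clause. The soft arguments above leave open whether a component could be a \emph{non}-minimal model of a $\kappa=0$ surface (for which $\omega_{E_i}$ is no longer torsion, so the degree obstruction disappears) or could be genuinely singular along $E$; eliminating these cases, and thereby cutting the list down to exactly the nonsingular rational and elliptic ruled surfaces, is precisely where one must use Reid's explicit analysis of canonical Gorenstein $3$-fold singularities via the general elephant and the crepant divisors extracted (\cite{Re}). By contrast, the minimal-model-program input used to build $\pi$ is by now entirely standard in dimension three.
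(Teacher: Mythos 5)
This statement is quoted verbatim from Reid (\cite{Re}, Corollaries 2.12 and 2.14) and the paper offers no proof of it at all, so there is no in-paper argument to measure you against; the only question is whether your reconstruction stands on its own. It does not quite, and you say so yourself. The first two-thirds of your argument is sound and standard: rational $=$ canonical for Gorenstein germs, the relative MMP (or Reid's own crepant blow-downs) producing a terminal $\mathbb{Q}$-factorial $\tilde V$ over $V$, the negativity lemma forcing $K_{\tilde V}=\pi^{*}K_V$, and the rationality of $V$ giving $R^{i}\pi_{*}\mathcal O_{\tilde V}=0$ and hence $H^{2}(E_i,\mathcal O_{E_i})=0$ for every component of the fibre. (Minor quibbles: $\pi^{-1}(0)$ need not be Cartier as a scheme-theoretic fibre, and components of a Weil divisor through a $cDV$ point of a Gorenstein terminal $3$-fold need not be Cartier there, so the adjunction $\omega_{E_i}\cong N_{E_i/\tilde V}$ requires the hedge you already include; also passing from $\varprojlim H^{2}(E_n,\mathcal O)=0$ to $H^{2}(E,\mathcal O_E)=0$ for the reduced fibre needs the surjectivity of the transition maps, which holds for degree reasons.)

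The genuine gap is exactly where you flag it. Knowing only that $p_g(E_i)=0$ and that $\omega_{E_i}$ has negative degree on some curve does not pin down the birational type: a blown-up Enriques, bielliptic, properly elliptic, or general-type surface with $p_g=0$ satisfies both conditions (the $(-1)$-curves supply the negative degree), so the Enriques--Kodaira case analysis you run cannot by itself reduce the list to rational and elliptic ruled surfaces, let alone show the components are nonsingular. The input that closes this gap in \cite{Re} is the analysis of the general hyperplane section through $0$ -- Reid shows it is a rational or elliptic Gorenstein surface singularity, and the crepant exceptional divisors of the $3$-fold sweep out the exceptional curves of the resolved surface section, which forces each $E_i$ to be ruled over a base of genus at most $1$ and nonsingular. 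Since you explicitly defer to Reid for precisely this step, your write-up is an honest and essentially accurate account of the proof's architecture, but it is a citation-with-context rather than an independent proof -- which, to be fair, is also all the paper itself provides.
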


\begin{remark}
The information of the irreducible and reduced components of the
exceptional set can also be found in \cite{Br}.
\end{remark}

\begin{definition}
A singular point $0$ is called compound Du Val ($cDV$) if for a
general section $H$ through $0$, $0\in H$ is a Du Val singularities.
\end{definition}

\begin{remark}
A cDV singularity is formally equivalent to the germ of a
hypersurface singularity $(\{f=0\}, 0)$ in $\mathbb{C}^4$, where
\begin{equation}
f(x_0,x_1,x_2,x_3)=f_{X_n}(x_0,x_1,x_2)+x_3g(x_0,x_1,x_2,x_3),
\end{equation}
where $X_n$ stands for $A_n$, $D_n$ or $E_n$, $g(0,0,0,0)=0$ and
$f_{X_n}$ is one of the following polynomials:
\begin{align}
f_{A_n}&=x_0^2+x_1^2+x_2^{n+1} \quad (n\ge 1)\\
f_{D_n}&=x_0^2+x_1^2x_2+x_2^{n-1} \quad  (n\ge 4)\\
f_{E_6}&=x_0^2+x_1^3+x_2^{4} \\
f_{E_7}&=x_0^2+x_1^3+x_1x_2^{3}\\
f_{E_8}&=x_0^2+x_1^3+x_2^{5}.
\end{align}

\end{remark}

\begin{theorem}\label{cDV}(\cite{Re2} Theorem 1.1)
Let $0\in V$ be a $3$-dimensional singularity. Then $0$ is an
isolated $cDV$ singularity if and only if $0$ is Gorenstein
terminal.
\end{theorem}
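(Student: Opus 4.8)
The plan is to prove both implications through a single mechanism: the behaviour of discrepancies under a \emph{general} hyperplane section. So I would first set up, once and for all, the following. Let $0\in V$ be a normal Gorenstein $3$-fold singularity and $H\ni 0$ a general hyperplane section, cut out by a linear form $h$. Since $V$ is Gorenstein (and Cohen--Macaulay in both cases we shall meet --- a hypersurface in one, a canonical hence rational singularity in the other), $H$ is a normal Gorenstein surface singularity whose dualizing sheaf is locally trivial at $0$, via the Poincar\'e residue of a local generator of $\omega_V$. Next I would fix a resolution $f\colon Y\to V$ good enough that the strict transform $H_Y$ of $H$ is smooth, $f|_{H_Y}\colon H_Y\to H$ is a resolution, and $f^{*}H=H_Y+\sum_i c_iE_i$ with each $c_i=\operatorname{ord}_{E_i}(h)\geq 1$ (every $f$-exceptional divisor lies over $0$, where $h$ vanishes). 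Writing $K_Y=f^{*}K_V+\sum_i a(E_i,V)E_i$ and combining adjunction on the smooth $Y$ with adjunction for the Cartier divisor $H\subset V$, I obtain
\[
K_{H_Y}=(f|_{H_Y})^{*}K_H+\sum_i\bigl(a(E_i,V)-c_i\bigr)E_i|_{H_Y},
\]
so that $a(E_i\cap H_Y,\,H)=a(E_i,V)-c_i$; and by letting $f$ and $H$ vary, every exceptional divisor over $0\in V$, respectively over $0\in H$, is captured this way.

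For the direction ``$cDV\Rightarrow$ Gorenstein terminal'' the argument is then short. An isolated $cDV$ point is analytically a hypersurface in $\mathbb{C}^{4}$, hence Gorenstein, and its general hyperplane section $H$ is Du Val by definition, hence canonical, so $a(E_i\cap H_Y,H)\geq 0$; the displayed identity forces $a(E_i,V)\geq c_i\geq 1>0$ for every exceptional divisor over $0$, i.e.\ $V$ is terminal. The only point needing care is that $E_i\cap H_Y$ be a genuine divisor on $H_Y$ all of whose components carry discrepancy $a(E_i,V)-c_i$, which holds for $H$ general after finitely many further blow-ups.

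For ``Gorenstein terminal $\Rightarrow$ isolated $cDV$'' I would first invoke that terminal singularities are smooth in codimension $\leq 2$, so the singular locus of the $3$-fold germ $(V,0)$ is at most $\{0\}$; this gives isolatedness. It remains to show that the general hyperplane section $H$ is Du Val. Being a normal Gorenstein surface singularity with locally trivial dualizing sheaf, $H$ is Du Val if and only if it is a rational singularity (Artin), equivalently canonical, equivalently --- by the displayed identity --- $a(E_i,V)\geq c_i=\operatorname{ord}_{E_i}(\mathfrak{m}_{V,0})$ for every exceptional divisor over $0$. Establishing this estimate is what I expect to be the main obstacle, and it is exactly where ``terminal'' (discrepancies $\geq 1$, not merely $\geq 0$) and $\dim V=3$ are indispensable: the cone over $(\mathbb{P}^{2},\mathcal{O}(3))$ is a \emph{canonical} Gorenstein $3$-fold whose natural exceptional divisor has $a(E,V)=0<1=\operatorname{ord}_E(\mathfrak{m})$, and accordingly its general hyperplane section is simple elliptic rather than Du Val. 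To get the estimate I would follow Reid's dissection of the resolution of a terminal Gorenstein $3$-fold singularity, tracking the exceptional divisors together with their orders along $\mathfrak{m}_{V,0}$; alternatively, since $V$ is canonical hence has rational singularities (Elkik--Flenner), so that $R^{q}f_{*}\mathcal{O}_Y=0$ for $q>0$, one can try to transfer rationality to $H$ through the exact sequence $0\to\mathcal{O}_Y(-H_Y)\to\mathcal{O}_Y\to\mathcal{O}_{H_Y}\to 0$, where the sticking point becomes the vanishing $R^{2}f_{*}\mathcal{O}_Y(-H_Y)=0$ together with the fact that $K_Y-f^{*}K_V$ is effective with all coefficients $\geq 1$. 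Granting the estimate, $H$ is a rational Gorenstein surface singularity, hence of one of the types $A_n$, $D_n$, $E_n$ by Artin's classification, which is precisely the $cDV$ condition for $V$.
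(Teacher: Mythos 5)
The paper offers no proof of this statement: it is quoted verbatim as Theorem~1.1 of Reid's \emph{Minimal models of canonical $3$-folds} (the reference [Re2]), so there is no internal argument to compare yours against, and what you are really attempting is a proof of Reid's theorem itself. Your proposal correctly identifies the right mechanism (adjunction of discrepancies along a general hyperplane section, $a(E_i\cap H_Y,H)=a(E_i,V)-c_i$) and correctly locates where ``terminal'' rather than ``canonical'' and $\dim V=3$ enter (your cone example is apt). But it does not amount to a proof of either implication. In the direction ``Gorenstein terminal $\Rightarrow$ cDV'' you reduce everything to the estimate $a(E_i,V)\geq \operatorname{ord}_{E_i}(h)$ and then explicitly leave it open, offering either to ``follow Reid's dissection'' (i.e.\ to defer to the very proof being replaced) or a rationality-transfer argument whose ``sticking point'' $R^{2}f_{*}\mathcal{O}_Y(-H_Y)=0$ you do not resolve. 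That estimate \emph{is} the theorem: it is the hard step of [Re2], carried out there by a delicate analysis of the resolution and recovered in modern treatments only via inversion of adjunction or the connectedness theorem; no sketch of it appears in your write-up.

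The other direction also has a gap that you paper over with ``by letting $f$ and $H$ vary, every exceptional divisor over $0\in V$ \ldots is captured this way.'' The adjunction identity only controls $a(E_i,V)$ for those $E_i$ whose intersection with the strict transform $H_Y$ is a nonempty divisor appearing with multiplicity one and in no other $E_j|_{H_Y}$; an exceptional divisor lying over $0$ need not meet $H_Y$ at all, and varying $H$ does not obviously capture a fixed divisorial valuation. Deducing ``$H$ canonical $\Rightarrow$ $V$ terminal along $H$'' from data seen on $H_Y$ alone is precisely (a case of) inversion of adjunction, whose proof requires an additional global input --- the connectedness theorem, Elkik's theorem that rationality propagates from a Cartier divisor, or Reid's explicit normal forms for the cDV equations --- none of which you invoke. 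So both halves of the argument stop exactly where the real work begins: as written, the proposal is a correct roadmap to the statement, not a proof of it, and for the purposes of this paper the statement should simply be cited from [Re2] as the authors do.
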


%\begin{corollary}\label{nGt}
%Let $0\in V$ be a $3$-dimensional terminal Gorenstein singularity,
%then there exists a resolution $\pi:\tilde{V}\rightarrow V$ such
%that one irreducible and reduced component of the exceptional set
%$F$ is a nonsingular rational surfaces and the discrepancy of $F$ is
%1.
%\end{corollary}
%\begin{proof}
%Form Theorem \ref{cDV}, $0$ is a $cDV$ singularity. So $0$ is an
%isolated hypersurface singularity. Take a blowing-up at $0$, the
%exceptional set, i.e., the projectivised tangent cone denoted by
%$F'$ is a subscheme of degree $2$ in $\mathbb{P}^3$ whose
%irreducible and reduced nonsingular model is a rational surface
%denoted by $F'$.
%\end{proof}

%\begin{remark}
%\cite{Cu} or \cite{Mo} gives a precisely description of the rational
%surfaces mentioned above.
%\end{remark}

\section{\textbf{New Invariants of singularities and new $CR$-invariants}}
Let V be an $n$-dimensional complex analytic subvariety in
$\mathbb{C}^N$ with only isolated singularities. In [Ya2], Yau
considered four kinds of sheaves of germs of holomorphic $p$-forms
\begin{enumerate}
\item
$\bar{\Omega}^p_V:=\pi_*\Omega^p_M$, where $\pi: M\longrightarrow V$
is a resolution of singularities of $V$.
\item
$\bar{\bar{\Omega}}^p_V:=\theta_*\Omega^p_{V\backslash V_{sing}}$
where $\theta : V\backslash V_{sing}\longrightarrow V$ is the
inclusion map and $V_{sing}$ is the singular set of $V$.
\item
$\Omega^p_V:=\Omega_{\mathbb{C}^N}^p/\mathscr{K}^p$, where
$\mathscr{K}^p=\{f\alpha+dg\wedge\beta :
\alpha\in\Omega_{\mathbb{C}^N}^p; \beta\in
\Omega_{\mathbb{C}^N}^{p-1}; f, g\in\mathscr{I}\}$ and $\mathscr{I}$
is the ideal sheaf of $V$ in $\mathbb{C}^N$.
\item
$\widetilde{\Omega}^p_V:=\Omega_{\mathbb{C}^N}^p/\mathscr{H}^p$,
where $\mathscr{H}^p=\{\omega\in\Omega_{\mathbb{C}^N}^p:
\omega|_{V\backslash V_{sing}}=0\}$.
\end{enumerate}

Clearly ${\Omega}^p_V$, $\widetilde{\Omega}^p_V$ are coherent.
$\bar{\Omega}^p_V$ is a coherent sheaf because $\pi$ is a proper
map. $\bar{\bar{\Omega}}^p_V$ is also a coherent sheaf by a theorem
of Siu (cf Theorem A of \cite{Si}). In case $V$ is a normal variety,
the dualizing sheaf $\omega_V$ of Grothendieck is actually the sheaf
$\bar{\bar{\Omega}}^n_V$.

\begin{definition}\label{s inv}
\emph{Let $V$ be an $n$-dimensional Stein space with $0$ as its only
singular point. Let $\pi: (M, A)\rightarrow (V, 0)$ be a resolution
of the singularity with $A$ as exceptional set. The geometric genus
$p_g$ and the irregularity $q$ of the singularity are defined as
follows (cf. \cite{Ya2}, \cite{St-St}):
\begin{equation}
p_g:= \dim \Gamma(M\backslash A, \Omega^n)/\Gamma(M, \Omega^n),
\end{equation}
\begin{equation}
q:= \dim \Gamma(M\backslash A, \Omega^{n-1})/\Gamma(M, \Omega^{n-1}),
\end{equation}
\begin{equation}
g^{(p)}:= \dim \Gamma(M, \Omega^{p}_M)/\pi^{*}\Gamma(V, \Omega^p_V).
\end{equation}}
\end{definition}

Let $X$ be a compact connected strongly pseudoconvex $CR$ manifold
of real dimension $2n-1$, in the boundary of a bounded strongly
pseudoconvex domain $D$ in $\mathbb{C}^N$. By a result of Harvey and
Lawson, there is a unique complex variety $V$ in $\mathbb{C}^N$ such
that the boundary of $V$ is $X$. Let $\pi: (M, A_1,\cdots,
A_k)\rightarrow (V, 0_1,\cdots, 0_k)$ be a resolution of the
singularities with $A_i=\pi^{-1}(0_i)$, $1\le i\le k$, as
exceptional sets.

In order to solve the classical complex Plateau problem, we need to
find some $CR$-invariant which can be calculated directly from the
boundary $X$ and the vanishing of this invariant will give
regularity of Harvey-Lawson solution to complex Plateau problem.

For this purpose, we define a new sheaf $\bar{\bar{\Omega}}_V^{1,
1}$, new invariant of surface singularities $g^{(1,1)}$ and new $CR$
invariant $g^{(1,1)}(X)$ in \cite{Du-Ya}.  Now, we are going to
generalize them to higher dimension for dealing with general complex
Plateau problem.

\begin{definition}
\emph{Let $(V, 0)$ be a Stein germ of an $n$-dimensional analytic
space with an isolated singularity at $0$. Define a sheaf of germs
$\bar{\bar{\Omega}}_V^{\Lambda^p 1}$ by the sheaf associated with the
presheaf
\[
U\mapsto \langle\Lambda^p\Gamma(U, \bar{\bar{\Omega}}^1_{V})\rangle,
\]
where $U$ is an open set of $V$ and $2 \le p\le n$.}
\end{definition}

\begin{lemma}\label{loc inv2}
\emph{Let $V$ be an $n$-dimensional Stein space with $0$ as its only
singular point in $\mathbb{C}^N$. Let $\pi: (M, A)\rightarrow (V,
0)$ be a resolution of the singularity with $A$ as exceptional set.
Then $\bar{\bar{\Omega}}_V^{\Lambda^p 1}$ is coherent and there is a
short exact sequence
\begin{equation}
0\longrightarrow\bar{\bar{\Omega}}_V^{\Lambda^p
1}\longrightarrow\bar{\bar{\Omega}}_V^p\longrightarrow\mathscr{G}^{(\Lambda^p
1)}\longrightarrow 0
\end{equation}
where $\mathscr{G}^{(\Lambda^p 1)}$ is a sheaf supported on the
singular point of $V$. Let
\begin{equation}
G^{(\Lambda^p 1)}(M\backslash A):=\Gamma(M\backslash A,
\Omega^p_M)/\langle\Lambda^p \Gamma(M\backslash A, \Omega^1_M)\rangle,
\end{equation}
then $\dim \mathscr{G}^{(\Lambda^p 1)}_0=\dim G^{(\Lambda^p
1)}(M\backslash A)$.}
\end{lemma}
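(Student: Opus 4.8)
The plan is to work locally on a Stein neighborhood $V$ of the isolated singular point $0$, with resolution $\pi:(M,A)\to(V,0)$, and to establish the three assertions in turn: coherence of $\bar{\bar\Omega}_V^{\Lambda^p 1}$, existence of the short exact sequence with a sheaf concentrated at $0$, and the equality of the two dimensions. First I would observe that $\bar{\bar\Omega}_V^{\Lambda^p 1}$ is, by construction, the image of the natural morphism $\Lambda^p\,\bar{\bar\Omega}_V^1\to\bar{\bar\Omega}_V^p$ coming from the wedge product of $1$-forms on the smooth locus $V\setminus\{0\}$ (both sheaves being the direct images under $\theta$ of the corresponding sheaves on $V\setminus\{0\}$). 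Since $\bar{\bar\Omega}_V^1$ and $\bar{\bar\Omega}_V^p$ are coherent — this is Siu's theorem, already invoked in the excerpt — the exterior power $\Lambda^p\,\bar{\bar\Omega}_V^1$ is coherent, hence its image in the coherent sheaf $\bar{\bar\Omega}_V^p$ is coherent as well. This gives coherence of $\bar{\bar\Omega}_V^{\Lambda^p 1}$ and defines the quotient sheaf $\mathscr{G}^{(\Lambda^p 1)}:=\bar{\bar\Omega}_V^p/\bar{\bar\Omega}_V^{\Lambda^p 1}$, so that the short exact sequence
\begin{equation}
0\longrightarrow\bar{\bar\Omega}_V^{\Lambda^p 1}\longrightarrow\bar{\bar\Omega}_V^p\longrightarrow\mathscr{G}^{(\Lambda^p 1)}\longrightarrow 0
\end{equation}
holds tautologically.

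The next step is to show $\mathscr{G}^{(\Lambda^p 1)}$ is supported at $0$. Away from the singular point, $V\setminus\{0\}$ is a complex manifold, so on any open $U$ not meeting $0$ the stalks of $\bar{\bar\Omega}^1_V$ and $\bar{\bar\Omega}^p_V$ agree with the usual sheaves of holomorphic $1$- and $p$-forms, and locally $\Omega^p$ is generated over $\mathcal{O}$ by wedges $dz_{i_1}\wedge\cdots\wedge dz_{i_p}$ of exact $1$-forms; hence the wedge map $\Lambda^p\,\bar{\bar\Omega}_V^1\to\bar{\bar\Omega}_V^p$ is surjective on $V\setminus\{0\}$, i.e. $\mathscr{G}^{(\Lambda^p 1)}$ vanishes there. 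Since it is coherent and supported at the single point $0$, its stalk $\mathscr{G}^{(\Lambda^p 1)}_0$ is finite-dimensional.

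For the dimension count I would relate everything to the resolution. Because $V$ is Stein with isolated singularity, $\Gamma(V,\bar{\bar\Omega}_V^q)=\Gamma(V\setminus\{0\},\Omega^q)=\Gamma(M\setminus A,\Omega^q_M)$ for every $q$ (removable singularities / the definition of $\bar{\bar\Omega}$, using $\mathrm{codim}\ge 2$ on $M$ for the exceptional set when identifying sections across $A$). Taking global sections in the short exact sequence and using that $V$ is Stein so $H^1(V,\bar{\bar\Omega}_V^{\Lambda^p 1})=0$, we get
\begin{equation}
\dim\mathscr{G}^{(\Lambda^p 1)}_0=\dim\Gamma(V,\mathscr{G}^{(\Lambda^p 1)})=\dim\frac{\Gamma(V,\bar{\bar\Omega}_V^p)}{\Gamma(V,\bar{\bar\Omega}_V^{\Lambda^p 1})}=\dim\frac{\Gamma(M\setminus A,\Omega^p_M)}{\langle\Lambda^p\Gamma(M\setminus A,\Omega^1_M)\rangle},
\end{equation}
where the last equality needs that $\Gamma(V,\bar{\bar\Omega}_V^{\Lambda^p 1})$ is exactly the submodule of $\Gamma(V,\bar{\bar\Omega}_V^p)$ generated by $p$-fold wedges of global $1$-forms; this is where the sheafification in the definition must be handled with care, since a priori global sections of the associated sheaf could be larger than the global span of wedges. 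The main obstacle is precisely this point: one must check that passing from the presheaf $U\mapsto\langle\Lambda^p\Gamma(U,\bar{\bar\Omega}^1_V)\rangle$ to its sheafification does not enlarge the space of global sections over the Stein space $V$. I expect this to follow from a partition-of-unity / Cartan's Theorem B argument — a global section of the sheafification is locally a finite sum of wedges, and one patches these local expressions using the vanishing of the relevant $H^1$ on the Stein base — but it is the step that requires genuine justification rather than formal manipulation. Once that is in hand, the stated equality $\dim\mathscr{G}^{(\Lambda^p 1)}_0=\dim G^{(\Lambda^p 1)}(M\setminus A)$ is immediate from the displayed computation.
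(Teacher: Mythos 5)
Your proof is correct and follows essentially the same route as the paper's: coherence via Siu's theorem together with the finite-type/image argument, support of $\mathscr{G}^{(\Lambda^p 1)}$ at $0$ because the stalks agree on the smooth locus, and the dimension count via Cartan's Theorem B on the Stein space $V$. The one step you flag --- that global sections of the sheafification are exactly the $\mathcal{O}(V)$-span of wedges of global $1$-forms --- is precisely what the paper also (silently) relies on, and it closes by Cartan's Theorem A applied to $\bar{\bar{\Omega}}^1_V$ (giving a surjection $\mathcal{O}^k\to\bar{\bar{\Omega}}^1_V$, hence $\Lambda^p\mathcal{O}^k\to\Lambda^p\bar{\bar{\Omega}}^1_V$) followed by Theorem B to lift global sections through that surjection.
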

\begin{proof}
Since the sheaf of germ $\bar{\bar{\Omega}}^p_V$ is coherent by a
theorem of Siu (cf Theorem A of \cite{Si}), for any point $w\in V$
there exists an open neighborhood $U$ of $w$ in $V$ such that
$\Gamma(U, \bar{\bar{\Omega}}^1_{V})$ is finitely generated over
$\Gamma(U, \mathscr{O}_{V})$. So $\Gamma(U,
\Lambda^p\bar{\bar{\Omega}}^1_{V}))$ is finitely generated over
$\Gamma(U, \mathscr{O}_{V})$, which means $\Gamma(U,
\bar{\bar{\Omega}}_{V}^{\Lambda^p 1})$ is finitely generated over
$\Gamma(U, \mathscr{O}_{V})$, i.e.
$\bar{\bar{\Omega}}_{V}^{\Lambda^p 1}$ is a sheaf of finite type. It
is obvious that $\bar{\bar{\Omega}}_{V}^{\Lambda^p 1}$ is a subsheaf
of $\bar{\bar{\Omega}}_{V}^p$ which is also coherent. So
$\bar{\bar{\Omega}}_{V}^{\Lambda^p 1}$ is coherent.

Notice that the stalk of $\bar{\bar{\Omega}}_V^{\Lambda^p 1}$ and
$\bar{\bar{\Omega}}_V^p$ coincide at each point different from the
singular point $0$, $\mathscr{G}^{(\Lambda^p 1)}$ is supported at
$0$. And from Cartan Theorem B
$$\dim
\mathscr{G}^{(\Lambda^p 1)}_0=\dim \Gamma(V,
\bar{\bar{\Omega}}_V^p)/\Gamma(V, \bar{\bar{\Omega}}_V^{\Lambda^p
1}) =\dim G^{(\Lambda^p 1)}(M\backslash A).$$
\end{proof}

Thus, from Lemma \ref{loc inv2}, we can define a local invariant of
a singularity which is independent of resolution.
\begin{definition}
\emph{Let $V$ be an $n$-dimensional Stein space with $0$ as its only
singular point. Let $\pi: (M, A)\rightarrow (V, 0)$ be a resolution
of the singularity with $A$ as exceptional set. Let
\begin{equation}
g^{(\Lambda^p 1)}(0):=\dim \mathscr{G}^{(\Lambda^p 1)}_0=\dim
G^{(\Lambda^p 1)}(M\backslash A).
\end{equation}}
\end{definition}

We will omit $0$ in $g^{(\Lambda^p 1)}(0)$ if there is no confusion
from the context.

Let $\pi: (M, A_1,\cdots, A_k)\rightarrow (V, 0_1,\cdots, 0_k)$ be a
resolution of the singularities with $A_i=\pi^{-1}(0_i)$, $1\le i\le
k$, as exceptional sets. In this case we still let
$$G^{(\Lambda^p
1)}(M\backslash A):=\Gamma(M\backslash A, \Omega^p_M)/\langle\Lambda^p
\Gamma(M\backslash A, \Omega^1_M)\rangle,$$ where $A=\cup_i A_i$.

\begin{definition}
\emph{If $X$ is a compact connected strongly pseudoconvex $CR$
manifold of real dimension $2n-1$, in the boundary of a bounded
strongly pseudoconvex domain $D$ in $\mathbb{C}^N$. Suppose $V$ in
$\mathbb{C}^N$ such that the boundary of $V$ is $X$. Let $\pi: (M,
A=\cup_i A_i)\rightarrow (V, 0_1,\cdots, 0_k)$ be a resolution of
the singularities with $A_i=\pi^{-1}(0_i)$, $1\le i\le k$, as
exceptional sets. Let
\begin{equation}
G^{(\Lambda^p 1)}(M\backslash A):=\Gamma(M\backslash A ,
\Omega^p_M)/\langle\Lambda^p \Gamma(M\backslash A, \Omega^1_M)\rangle
\end{equation}
and
\begin{equation}
G^{(\Lambda^p 1)}(X):=\mathscr{S}^p(X)/\langle\Lambda^p \mathscr{S}^1(X)\rangle,
\end{equation}
where $\mathscr{S}^q$ are holomorphic cross sections of
$\wedge^q(\widehat{T}(X)^*)$. Then we set
\begin{equation}
g^{(\Lambda^p 1)}(M\backslash A):=\dim G^{(\Lambda^p 1)}(M\backslash
A),
\end{equation}
\begin{equation}
g^{(\Lambda^p 1)}(X):=\dim G^{(\Lambda^p 1)}(X).
\end{equation}}
\end{definition}

\begin{lemma}\label{boundary}
\emph{Let $X$ be a compact connected strongly pseudoconvex $CR$
manifold of real dimension $2n-1$ which bounds a bounded strongly
pseudoconvex variety $V$ with only isolated singularities
$\{0_1,\cdots, 0_k\}$ in $\mathbb{C}^N$. Let $\pi: (M, A_1,\cdots,
A_k)\rightarrow (V, 0_1,\cdots, 0_k)$ be a resolution of the
singularities with $A_i=\pi^{-1}(0_i)$, $1\le i\le k$, as
exceptional sets. Then $g^{(\Lambda^p 1)}(X)=g^{(\Lambda^p
1)}(M\backslash A)$, where $A=\cup A_i$, $1\le i\le k$.}
\end{lemma}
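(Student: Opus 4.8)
The plan is to reduce the lemma to a single structural fact: there is a multiplicative isomorphism of graded vector spaces
\[
\Phi=\bigoplus_{q}\Phi_{q}\colon\ \bigoplus_{q}\mathscr{S}^{q}(X)\ \xrightarrow{\ \sim\ }\ \bigoplus_{q}\Gamma(M\setminus A,\Omega^{q}_{M}),\qquad \Phi_{a+b}(u\wedge v)=\Phi_{a}(u)\wedge\Phi_{b}(v),
\]
which restricts in each degree to a linear isomorphism and in particular sends $\mathscr{S}^{1}(X)$ onto $\Gamma(M\setminus A,\Omega^{1}_{M})$. Granting such a $\Phi$, the lemma is immediate: since $\Phi_{1}$ is onto $\Gamma(M\setminus A,\Omega^{1}_{M})$ and $\Phi$ is multiplicative,
\[
\Phi_{p}\bigl(\langle\Lambda^{p}\mathscr{S}^{1}(X)\rangle\bigr)=\langle\Lambda^{p}\Phi_{1}(\mathscr{S}^{1}(X))\rangle=\langle\Lambda^{p}\Gamma(M\setminus A,\Omega^{1}_{M})\rangle,
\]
while $\Phi_{p}\colon\mathscr{S}^{p}(X)\xrightarrow{\sim}\Gamma(M\setminus A,\Omega^{p}_{M})$; hence $\Phi_{p}$ descends to an isomorphism $G^{(\Lambda^{p}1)}(X)\xrightarrow{\sim}G^{(\Lambda^{p}1)}(M\setminus A)$, and comparing dimensions gives $g^{(\Lambda^{p}1)}(X)=g^{(\Lambda^{p}1)}(M\setminus A)$.

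To build $\Phi$ I would proceed in two steps, following the surface case of \cite{Du-Ya}. First, $\pi$ restricts to a biholomorphism $M\setminus A\xrightarrow{\sim}V\setminus\{0_{1},\dots,0_{k}\}=:V^{*}$, so pullback of holomorphic forms is a graded-algebra isomorphism $\Gamma(V^{*},\Omega^{\bullet}_{V^{*}})\xrightarrow{\sim}\Gamma(M\setminus A,\Omega^{\bullet}_{M})$ carrying $1$-forms to $1$-forms. Second — the substantive step — one identifies $\mathscr{S}^{q}(X)$ with $\Gamma(V^{*},\Omega^{q}_{V^{*}})$ through the restriction map attached to the isomorphism $\widehat{T}(X)\cong\widehat{T}(M)|_{X}$ of Section 2 (valid because near $X$ the set $V^{*}$ is a complex manifold containing $X$ as a smooth real hypersurface in its boundary), together with its inverse given by extension. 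Injectivity of this map is the identity theorem on the connected manifold $V^{*}$; surjectivity is where the hypotheses are used. Since $X$ is strongly pseudoconvex, every holomorphic cross section of $\wedge^{q}\widehat{T}(X)^{*}$ extends holomorphically across $X$ into a one-sided neighborhood of $X$ inside $V$, by the Hartogs–Lewy extension phenomenon for $CR$-holomorphic objects on a strictly pseudoconvex boundary; and such a germ along $X$ then propagates to all of $V^{*}$ because $V$ is Stein and its singular locus $\{0_{1},\dots,0_{k}\}$ is a finite set in a space of dimension $\ge 2$ (Hartogs across the compact core and across the isolated singular points). Finally, this identification respects exterior products by uniqueness of holomorphic extension: the extension of $\omega\wedge\omega'$ and the wedge of the extensions of $\omega$ and $\omega'$ are two holomorphic forms on $V^{*}$ with the same germ along $X$, hence equal. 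Composing the two steps yields the desired multiplicative graded isomorphism $\Phi$, in each degree and in particular for $q=1$ and $q=p$.

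The delicate point — and the only place where the analytic hypotheses genuinely enter — is the surjectivity in the second step: one must produce, from a holomorphic $q$-form given only on the $CR$ manifold $X$, a \emph{global} holomorphic $q$-form on $M\setminus A$ (equivalently on $V^{*}$). It is exactly here that strong pseudoconvexity of $X$ (hence the one-sided extendability of $CR$-holomorphic sections) together with the Steinness of the bounded interior $V$ is indispensable; this is the same mechanism used in \cite{Du-Ya} for $n=2$, and the argument is insensitive to the degree $q$ and the dimension $n$, so it carries over verbatim. Everything else — passing to the subspaces $\langle\Lambda^{p}(\cdot)\rangle$ and to the quotients, and reading off the equality of dimensions — is formal manipulation of the isomorphism $\Phi$.
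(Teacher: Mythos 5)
Your proposal is correct and follows essentially the same route as the paper: extend each holomorphic $q$-form on the strictly pseudoconvex boundary $X$ one-sidedly into $M$, then fill in to all of $M\setminus A$ (the paper does this via Andreotti--Grauert on the sublevel sets $M_r$ of a one-convex exhaustion, you via a Hartogs-type argument on the Stein interior $V^{*}$ -- the same mechanism), and finally observe that the resulting degree-wise identification is compatible with wedge products so the two quotients $G^{(\Lambda^p 1)}$ have equal dimension. The multiplicativity and the formal passage to quotients, which you spell out, are left implicit in the paper's one-paragraph proof.
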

\begin{proof}
Take a one-convex exhausting function $\phi$ on $M$ such that
$\phi\ge 0$ on $M$ and $\phi(y)=0$ if and only if $y\in A$. Set
$M_r=\{y\in M, \phi(y)\ge r\}$. Since $X=\partial M$ is strictly
pseudoconvex, any holomorphic $q$-form $\theta\in \mathscr{S}^q(X)$
can be extended to a one side neighborhood of $X$ in $M$. Hence
$\theta$ can be thought of as holomorphic $q$-form on $M_r$, i.e. an
element in $\Gamma(M_r, \Omega^q_{M_r})$. By Andreotti and Grauert
(\cite{An-Gr}), $\Gamma(M_r, \Omega^q_{M_r})$ is isomorphic to
$\Gamma(M\backslash A, \Omega^q_{M})$. So $g^{(\Lambda^p
1)}(X)=g^{(\Lambda^p 1)}(M\backslash A)$.
\end{proof}

 By Lemma \ref{boundary} and the proof of Lemma \ref{loc inv2}, we can get the following lemma easily.

\begin{lemma}\label{gp1}
\emph{Let $X$ be a compact connected strongly pseudoconvex $CR$
manifold of real dimension $2n-1$, which bounds a bounded strongly
pseudoconvex variety $V$ with only isolated singularities
$\{0_1,\cdots, 0_k\}$ in $\mathbb{C}^N$. Then $g^{(\Lambda^p
1)}(X)=\sum_i g^{(\Lambda^p 1)}(0_i)=\sum_i \dim
\mathscr{G}^{(\Lambda^p 1)}_{0_i}$.}
\end{lemma}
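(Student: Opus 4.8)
The plan is to combine the global identification from Lemma \ref{boundary}, which says $g^{(\Lambda^p 1)}(X) = g^{(\Lambda^p 1)}(M\backslash A)$ with $A = \cup_i A_i$, with a localization argument at each singular point, exactly as in the proof of Lemma \ref{loc inv2}. First I would recall from Lemma \ref{loc inv2} and Cartan's Theorem B that for each isolated singular point $0_i$ we have a local identification
\[
\dim \mathscr{G}^{(\Lambda^p 1)}_{0_i} = \dim \Gamma(V_i, \bar{\bar{\Omega}}^p_V)/\Gamma(V_i, \bar{\bar{\Omega}}_V^{\Lambda^p 1}) = \dim G^{(\Lambda^p 1)}(M_i\backslash A_i),
\]
where $V_i$ is a small Stein neighborhood of $0_i$ in $V$ containing no other singular point, and $M_i = \pi^{-1}(V_i)$. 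So the content of the lemma is that the global quotient $G^{(\Lambda^p 1)}(M\backslash A)$ splits as the direct sum of the local quotients $G^{(\Lambda^p 1)}(M_i\backslash A_i)$.

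To see the splitting, I would argue at the level of the coherent sheaf $\mathscr{G}^{(\Lambda^p 1)}$ on $V$, which by Lemma \ref{loc inv2} (applied locally near each $0_i$) is supported on the finite set $\{0_1,\dots,0_k\}$. A coherent sheaf supported on a finite set is a skyscraper sheaf, so its space of global sections is the direct sum of its stalks: $\Gamma(V, \mathscr{G}^{(\Lambda^p 1)}) = \bigoplus_i \mathscr{G}^{(\Lambda^p 1)}_{0_i}$. On the other hand, applying $\Gamma(V, -)$ to the short exact sequence
\[
0 \longrightarrow \bar{\bar{\Omega}}_V^{\Lambda^p 1} \longrightarrow \bar{\bar{\Omega}}_V^p \longrightarrow \mathscr{G}^{(\Lambda^p 1)} \longrightarrow 0
\]
and using that $V$ is Stein so that $H^1(V, \bar{\bar{\Omega}}_V^{\Lambda^p 1}) = 0$ by Cartan's Theorem B, we get $\Gamma(V, \mathscr{G}^{(\Lambda^p 1)}) = \Gamma(V, \bar{\bar{\Omega}}_V^p)/\Gamma(V, \bar{\bar{\Omega}}_V^{\Lambda^p 1})$. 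Identifying $\Gamma(V, \bar{\bar{\Omega}}_V^p)$ with $\Gamma(M\backslash A, \Omega^p_M)$ (resolution composed with restriction to the smooth locus, using that $M\backslash A \cong V\backslash V_{\mathrm{sing}}$ and normality) and likewise $\Gamma(V, \bar{\bar{\Omega}}_V^{\Lambda^p 1})$ with $\langle\Lambda^p\Gamma(M\backslash A, \Omega^1_M)\rangle$, this last quotient is precisely $G^{(\Lambda^p 1)}(M\backslash A)$, whose dimension is $g^{(\Lambda^p 1)}(M\backslash A)$. Chaining these identifications with Lemma \ref{boundary} yields
\[
g^{(\Lambda^p 1)}(X) = g^{(\Lambda^p 1)}(M\backslash A) = \dim \bigoplus_i \mathscr{G}^{(\Lambda^p 1)}_{0_i} = \sum_i \dim \mathscr{G}^{(\Lambda^p 1)}_{0_i} = \sum_i g^{(\Lambda^p 1)}(0_i).
\]

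The routine points to check are that the sheaf-theoretic identifications $\Gamma(V, \bar{\bar{\Omega}}_V^p) \cong \Gamma(M\backslash A, \Omega^p_M)$ and the analogous one for the $\Lambda^p$-subsheaf are compatible with the quotient maps — this is essentially the definition of $\bar{\bar{\Omega}}$ together with the fact that $\pi$ is an isomorphism off the exceptional set — and that taking the span $\langle\Lambda^p(-)\rangle$ commutes with these restriction isomorphisms. The only mild subtlety, and the step I expect to require the most care, is justifying that $\mathscr{G}^{(\Lambda^p 1)}$ is genuinely a finite skyscraper: this needs that each stalk $\mathscr{G}^{(\Lambda^p 1)}_{0_i}$ is finite-dimensional, which follows from the coherence established in Lemma \ref{loc inv2} applied locally at each $0_i$ (so that the stalk is a finitely generated torsion module over the Artinian-modulo-support local ring, hence finite-dimensional over $\mathbb{C}$), together with the observation that there are only finitely many singular points since $V$ has only isolated singularities and $X$ is compact. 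Once that is in place, the direct-sum decomposition of global sections of a skyscraper sheaf is immediate, and the lemma follows.
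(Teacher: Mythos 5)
Your proposal is correct and follows exactly the route the paper intends: the paper itself only remarks that the lemma follows ``by Lemma \ref{boundary} and the proof of Lemma \ref{loc inv2},'' and your argument (Lemma \ref{boundary} to pass from $X$ to $M\backslash A$, then the short exact sequence, Cartan's Theorem B on the Stein space $V$, and the skyscraper decomposition of $\mathscr{G}^{(\Lambda^p 1)}$ over the finitely many singular points) is precisely the intended expansion of that remark.
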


The following proposition is to show that $g^{(\Lambda^p 1)}$ is
bounded above.
\begin{proposition}
\emph{Let $V$ be an $n$-dimensional Stein space with $0$ as its only
singular point. Then
\begin{equation*}
g^{(\Lambda^p 1)}\le \left\{ \begin{aligned}
         g^{(p)},&\quad p\le n-2; \\
                  g^{(n-1)}+q,&\quad p=n-1;\\
                  g^{(n)}+p_g,&\quad p=n.
                          \end{aligned} \right.
                          \end{equation*}
}
\end{proposition}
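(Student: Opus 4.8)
The plan is to build explicit natural surjections from the relevant spaces of $p$-forms onto the quotients $\bar{\bar{\Omega}}^p_V / \bar{\bar{\Omega}}^{\Lambda^p 1}_V$ at the stalk level, and to estimate the dimension of the target using the known invariants. Recall from Lemma~\ref{loc inv2} that $g^{(\Lambda^p 1)} = \dim \mathscr{G}^{(\Lambda^p 1)}_0 = \dim \Gamma(V, \bar{\bar{\Omega}}^p_V)/\Gamma(V, \bar{\bar{\Omega}}^{\Lambda^p 1}_V)$, and that the denominator is (the image of) $\langle \Lambda^p \Gamma(M\setminus A, \Omega^1_M)\rangle$. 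So I want to compare $\Gamma(M\setminus A, \Omega^p_M)$ modulo wedge products of global $1$-forms with the classical quantities $g^{(p)} = \dim \Gamma(M,\Omega^p_M)/\pi^*\Gamma(V,\Omega^p_V)$, $q$, and $p_g$.

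First I would treat the generic case $p \le n-2$. Here the key point is that $\pi^*\Gamma(V,\Omega^p_V)$ is spanned by pullbacks of Kähler differentials on $V$, hence by wedge products of (pullbacks of) elements of $\Omega^1_V$, which lie in $\langle \Lambda^p \Gamma(M, \Omega^1_M)\rangle \subseteq \langle \Lambda^p \Gamma(M\setminus A, \Omega^1_M)\rangle$. Therefore the subspace we quotient by in defining $g^{(\Lambda^p 1)}$ already contains $\pi^*\Gamma(V,\Omega^p_V)$ (after noting $\Gamma(M\setminus A,\Omega^p_M)=\Gamma(M,\Omega^p_M)$ when — well, more carefully — one reduces to $\Gamma(M,\Omega^p_M)$ using that for $p\le n-2$ all these sections extend across $A$, or one simply observes that the comparison of quotients only enlarges the denominator). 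Consequently there is a surjection $\Gamma(M,\Omega^p_M)/\pi^*\Gamma(V,\Omega^p_V) \twoheadrightarrow \Gamma(M\setminus A,\Omega^p_M)/\langle\Lambda^p\Gamma(M\setminus A,\Omega^1_M)\rangle$, giving $g^{(\Lambda^p 1)}\le g^{(p)}$. For $p = n-1$ the same argument applies, except that now $\Gamma(M\setminus A,\Omega^{n-1}_M)$ need not equal $\Gamma(M,\Omega^{n-1}_M)$; the defect is measured precisely by $q = \dim\Gamma(M\setminus A,\Omega^{n-1}_M)/\Gamma(M,\Omega^{n-1}_M)$, so combining the two estimates via the obvious filtration yields $g^{(\Lambda^{n-1}1)} \le g^{(n-1)} + q$. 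Likewise for $p = n$, the defect between $\Gamma(M\setminus A,\Omega^n_M)$ and $\Gamma(M,\Omega^n_M)$ is exactly $p_g$, giving $g^{(\Lambda^n 1)}\le g^{(n)}+p_g$.

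More precisely, the clean way to organize all three cases at once is the diagram of quotients
\[
\frac{\Gamma(M\setminus A,\Omega^p_M)}{\langle\Lambda^p\Gamma(M\setminus A,\Omega^1_M)\rangle}
\;\twoheadleftarrow\;
\frac{\Gamma(M\setminus A,\Omega^p_M)}{\pi^*\Gamma(V,\Omega^p_V)}
\]
together with the short exact sequence relating $\Gamma(M\setminus A,\Omega^p_M)/\pi^*\Gamma(V,\Omega^p_V)$, the "interior" quotient $\Gamma(M,\Omega^p_M)/\pi^*\Gamma(V,\Omega^p_V) = g^{(p)}$-dimensional, and the "boundary defect" $\Gamma(M\setminus A,\Omega^p_M)/\Gamma(M,\Omega^p_M)$, which has dimension $0$ for $p\le n-2$, $q$ for $p=n-1$, $p_g$ for $p=n$. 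The first map being surjective is where the inclusion $\pi^*\Gamma(V,\Omega^p_V)\subseteq \langle\Lambda^p\Gamma(M\setminus A,\Omega^1_M)\rangle$ gets used, and this in turn rests on the fact that $\Omega^p_V$ is generated as an $\mathscr{O}_V$-module by $p$-fold wedges $dg_1\wedge\cdots\wedge dg_p$ with $g_i$ local sections of $\mathscr{O}_V$ (i.e. of $\Omega^1_V = $ Kähler differentials), whose pullbacks are wedges of global holomorphic $1$-forms on $M\setminus A$.

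The main obstacle I anticipate is the justification of the "boundary defect" dimensions, namely that $\Gamma(M\setminus A,\Omega^p_M)/\Gamma(M,\Omega^p_M)$ has dimension $0$ for $p\le n-2$, $q$ for $p=n-1$, and $p_g$ for $p=n$. The case $p=n-1,n$ is by definition of $q$ and $p_g$. The vanishing for $p\le n-2$ requires a Hartogs/extension argument: holomorphic $p$-forms on $M\setminus A$ extend across the exceptional set $A$ when $p$ is small relative to the dimension — this should follow from the strong pseudoconvexity (one-convexity) of $M$ and Scheja-type extension results, or from the finiteness theorems of Andreotti–Grauert applied as in Lemma~\ref{boundary}. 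The other subtlety to handle carefully is that $\langle\Lambda^p\Gamma(M\setminus A,\Omega^1_M)\rangle$ is taken over $M\setminus A$ rather than $M$, but this only makes the denominator larger, so all the inequalities go the right way; I would remark on this explicitly to avoid confusion.
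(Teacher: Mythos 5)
Your proposal is correct and follows essentially the same route as the paper's own proof: the paper likewise combines the inclusion $\pi^{*}\Gamma(V,\Omega^p_V)=\langle\pi^{*}(\Lambda^p\Gamma(V,\Omega^1_V))\rangle\subseteq\Lambda^p\Gamma(M\backslash A,\Omega^1_M)$ with the fact that $\dim\Gamma(M\backslash A,\Omega^p_M)/\Gamma(M,\Omega^p_M)$ equals $0$, $q$, or $p_g$ according to whether $p\le n-2$, $p=n-1$, or $p=n$. The only difference is that you flag the extension-across-$A$ step for $p\le n-2$ as needing justification, which the paper simply asserts; the cases $p=n-1,n$ are indeed just the definitions of $q$ and $p_g$.
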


\begin{proof}
Since $$g^{(\Lambda^p 1)}=\dim\Gamma(M\backslash A,
\Omega^p_M)/\langle\Lambda^p \Gamma(M\backslash A, \Omega^1_M)\rangle,$$
\[
g^{(p)}= \dim \Gamma(M, \Omega^{p})/\pi^{*}\Gamma(V, \Omega^p_V),
\]
\begin{equation*}
\dim \Gamma(M\backslash A, \Omega^p_M)/\Gamma(M, \Omega^p_M)= \left\{
\begin{aligned}
         0,&\quad p\le n-2; \\
                  q,&\quad p=n-1;\\
                  p_g,&\quad p=n.
                          \end{aligned} \right.
                          \end{equation*}
and
\begin{equation}
\begin{split}
\pi^{*}\Gamma(V, \Omega^p_V)&=\langle\pi^{*}(\Lambda^p \Gamma(V,
\Omega^1_V)\rangle\\
&\subseteq \Lambda^p\Gamma(M, \Omega^1_M)\\
&\subseteq \Lambda^p\Gamma(M\backslash A, \Omega^1_M),
\end{split}
 \end{equation} the result follows easily.
\end{proof}

The following theorem is the crucial part for solving the classical
complex Plateau problem of real dimension $3$.

\begin{theorem}(\cite{Du-Ya})\label{new inv}
\emph{Let $V$ be a $2$-dimensional Stein space with $0$ as its only
normal singular point with $\mathbb{C}^*$-action. Let $\pi: (M,
A)\rightarrow (V, 0)$ be a minimal good resolution of the
singularity with $A$ as exceptional set, then $g^{(\Lambda^2 1)}\ge
1$.}
\end{theorem}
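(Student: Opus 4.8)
The plan is to exploit the $\mathbb{C}^*$-action to reduce the statement to a concrete computation on the minimal good resolution. Since $(V,0)$ is a normal surface singularity with a good $\mathbb{C}^*$-action, the minimal good resolution $\pi:(M,A)\to(V,0)$ has exceptional set $A$ whose dual graph is star-shaped, with a central curve $E_0$ of genus $g_0$ and several chains of rational curves as arms; moreover the action extends to $M$. The key object to control is the quotient
\[
G^{(\Lambda^2 1)}(M\backslash A)=\Gamma(M\backslash A,\Omega^2_M)/\langle\Lambda^2\Gamma(M\backslash A,\Omega^1_M)\rangle,
\]
and I want to produce a holomorphic $2$-form on $M\backslash A$ that is manifestly \emph{not} in the span of wedges of global holomorphic $1$-forms. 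First I would set up the weight (eigenspace) decomposition of each of $\Gamma(M\backslash A,\Omega^1_M)$ and $\Gamma(M\backslash A,\Omega^2_M)$ under the induced $\mathbb{C}^*$-action, using the Pinkham--Dolgachev description of such spaces in terms of sections of sheaves on the central curve $E_0$ (or equivalently on $\mathrm{Proj}$ of the graded ring of $V$).

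The main steps, in order: (1) Choose local equivariant coordinates near a generic point of the central curve so that the $\mathbb{C}^*$-action is linear, and write down generators of $\Gamma(M\backslash A,\Omega^1_M)$ together with their weights; in particular identify the minimal weight occurring. (2) Compute the weights that can occur in $\langle\Lambda^2\Gamma(M\backslash A,\Omega^1_M)\rangle$: a wedge of two $1$-forms has weight equal to the sum of the two weights, so the set of achievable weights in the image has a definite lower bound, namely twice the minimal $1$-form weight (and one must also account for the "$dt/t\wedge(\text{something})$'' type forms, where $t$ is the coordinate along the orbit). (3) Exhibit an explicit $\mathbb{C}^*$-invariant (or low-weight) holomorphic $2$-form $\omega$ on $M\backslash A$ — the natural candidate is the form dual to the Reid/Gorenstein-type generator, or simply $\mathrm{d}u\wedge\mathrm{d}v$ in suitable coordinates — whose weight is strictly smaller than $2\cdot(\text{minimal }1\text{-form weight})$, hence cannot lie in the image. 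Counting this one class gives $g^{(\Lambda^2 1)}\ge 1$. (4) Check that $\omega$ really does extend holomorphically over all of $M\backslash A$ (not just near $E_0$), which is where the star-shaped structure and the minimality of the resolution are used to rule out poles along the arms.

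The hard part will be step (2)–(3): making the weight bookkeeping precise enough to guarantee that no combination of wedges of holomorphic $1$-forms can reproduce the chosen $2$-form. The subtlety is that $\Gamma(M\backslash A,\Omega^1_M)$ can be large, and products of $1$-forms of low weight with functions of low weight could a priori sweep out many weights; one must show that the weight of \emph{every} element of $\Lambda^2\Gamma(M\backslash A,\Omega^1_M)$ is bounded below strictly above the weight of $\omega$. I would handle this by working on the minimal good resolution and using that, by minimality, there is no $(-1)$-curve in $A$, so the adjunction/discrepancy data force the relevant sheaf degrees on $E_0$ to be small, pinning down the minimal weight of a global $1$-form and separating it from the weight of $\omega$. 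An alternative, cleaner route — and the fallback if the weight estimate is delicate — is to argue by duality: identify $G^{(\Lambda^2 1)}$ with a cohomology group supported at $0$ via Lemma \ref{loc inv2}, and show this group is nonzero by exhibiting a nonzero element of the dual using Grauert--Riemenschneider vanishing together with the nontriviality of $H^1(M,\mathcal{O}_M)$ or of the canonical cycle for a non-rational, or even rational-but-singular, $\mathbb{C}^*$-surface; in all such cases the relation "every $2$-form is a sum of wedges of $1$-forms'' fails because there are not enough independent global $1$-forms, and a dimension count of eigenspaces closes the argument.
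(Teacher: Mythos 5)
First, a point of reference: the paper you are working from does \emph{not} prove Theorem \ref{new inv} at all --- it is quoted from \cite{Du-Ya}, so there is no in-paper proof to match. The closest internal analogue is the proof of Theorem \ref{new3}, whose mechanism is not a $\mathbb{C}^*$-weight count but a vanishing-order count along a distinguished exceptional divisor $E_j$, obtained from the residue sequence $0\rightarrow \Omega^1(\log E_j)(-E_j)\rightarrow \Omega^1\rightarrow \Omega^1_{E_j}\rightarrow 0$: one shows every wedge of global $1$-forms vanishes to order $\ge 2$ along $E_j$ while some global top form vanishes to order $\le 1$. Your proposal takes a genuinely different route (equivariant weight decomposition \`a la Pinkham--Dolgachev), which is a reasonable idea for $\mathbb{C}^*$-singularities, but as formulated it has a real gap.

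The gap is in steps (2)--(3): the strict inequality $\mathrm{wt}(\omega)<2\,w_{\min}$, where $w_{\min}$ is the minimal weight occurring in $\Gamma(M\backslash A,\Omega^1_M)$, is false in standard examples. Take $V$ a simple elliptic singularity (or any cone over a curve $C$ of genus $\ge 1$ with a positive line bundle $L$). Then the weight-$0$ piece of $\Gamma(M\backslash A,\Omega^1_M)$ contains the pullbacks of $H^0(C,\Omega^1_C)\neq 0$, so $w_{\min}=0$; and the natural candidate $2$-form (the Gorenstein form, locally $\omega_C\wedge dt/t$) also has weight $0=2\,w_{\min}$. So no weight gap exists, and the ``$dt/t\wedge(\cdot)$'' forms you mention are exactly the danger: if a holomorphic connection form of weight $0$ existed on $M\backslash A$, the Gorenstein form \emph{would} be a wedge. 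What saves the statement in this case is a different mechanism --- all weight-$0$ $1$-forms are pullbacks from the central curve (the connection piece is obstructed by $\deg L\neq 0$ in $H^1(C,\Omega^1_C)$), so their pairwise wedges vanish identically --- and this must be argued separately; your plan as written does not contain it. Your fallback via $H^1(M,\mathcal{O}_M)\neq 0$ or non-rationality also cannot close the argument, because the theorem covers \emph{rational} $\mathbb{C}^*$-singularities (e.g.\ quotient singularities), for which $H^1(M,\mathcal{O}_M)=0$ and $p_g=q=0$; there the whole content is a genuinely positive-weight comparison on the star-shaped resolution. In short: the weight strategy can likely be made to work, but only after splitting into cases according to the genus of the central curve and replacing the single inequality $\mathrm{wt}(\omega)<2w_{\min}$ by the finer statement that the specific graded piece containing $\omega$ meets the $\mathcal{O}(V)$-module generated by wedges trivially --- which is essentially the vanishing-order argument of Theorem \ref{new3} in disguise.
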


\begin{remark}
We also show that $g^{(\Lambda^2 1)}$ is strictly positive for
rational singularities (\cite{Du-Ga}) and minimal elliptic
singularities (\cite{Du-Ga2}) and exact $1$ for rational double
points, triple points and quotient singularities (\cite{Du-Lu-Ya}).
\end{remark}
Similarly, the following theorem is the crucial part for solving the
classical complex Plateau problem of real dimension $5$.
\begin{theorem}\label{newn}
\emph{Let $V$ be an $n$-dimensional Stein space with $0$ as its only
non-rational singular point, where $n>2$, then $g^{(\Lambda^n 1)}\ge
1$.}
\end{theorem}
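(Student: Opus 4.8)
The plan is to prove the contrapositive: if $g^{(\Lambda^n 1)}(0)=0$ then $(V,0)$ is a rational singularity. By Lemma \ref{loc inv2} and the definitions above, the hypothesis $g^{(\Lambda^n 1)}(0)=\dim G^{(\Lambda^n 1)}(M\setminus A)=0$ means exactly that on a resolution $\pi\colon(M,A)\to(V,0)$ every holomorphic $n$-form on $M\setminus A$ is a finite sum of wedges $\omega_1\wedge\cdots\wedge\omega_n$ with $\omega_j\in\Gamma(M\setminus A,\Omega^1_M)$. As in the treatment of Theorems A and B we pass to the normalization, so we may assume $V$ is normal; then $M\setminus A\cong V\setminus\{0\}$ is smooth, $\Gamma(M\setminus A,\Omega^1_M)=\Gamma(V,\bar{\bar{\Omega}}^1_V)$ is the module of reflexive $1$-forms, $\Gamma(M\setminus A,\Omega^n_M)=\Gamma(V,\bar{\bar{\Omega}}^n_V)=\Gamma(V,\omega_V)$, while $\Gamma(M,\Omega^n_M)=\Gamma(V,\pi_*\omega_M)$ and $\omega_V/\pi_*\omega_M$ is supported at $0$ with length the geometric genus $p_g$.

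The core is the claim $\langle\Lambda^n\Gamma(M\setminus A,\Omega^1_M)\rangle\subseteq\Gamma(M,\Omega^n_M)$: although a single holomorphic $1$-form on $M\setminus A$ need not extend across the (possibly divisorial) exceptional set $A$, the $n$-fold wedge of any $n$ of them does extend to a holomorphic $n$-form on $M$. I would prove this by expressing a reflexive $1$-form on $V\subseteq\mathbb{C}^N$ as an $\mathscr{O}_V$-combination $\sum_i g_i\,(dz_i|_V)$ of the ambient coordinate differentials --- legitimate globally when $V$ is Cohen--Macaulay of dimension $\ge 3$, since then $H^1(V\setminus\{0\},\mathscr{O}_V)\cong H^2_{\{0\}}(\mathscr{O}_V)=0$ --- and noting that $z_i\circ\pi$ and $g_i\circ\pi$ are holomorphic on $M$, so $\omega_1\wedge\cdots\wedge\omega_n=\sum_{|I|=n}\det\!\bigl((g^{(k)}_i)_{i\in I}\bigr)\,(dz_I|_V)$ pulls back to a genuine holomorphic $n$-form on $M$: the poles of the individual factors along the components of $A$ are cancelled by the Jacobian factors. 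Granting the claim, $g^{(\Lambda^n 1)}(0)=0$ forces $\Gamma(M\setminus A,\Omega^n_M)=\langle\Lambda^n\Gamma(M\setminus A,\Omega^1_M)\rangle\subseteq\Gamma(M,\Omega^n_M)$, hence $p_g=0$; together with the Cohen--Macaulay property, $p_g=0$ is exactly rationality of $(V,0)$ (Kempf's criterion). In particular this disposes of the Gorenstein situation underlying Theorem B.

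The main obstacle is the non--Cohen--Macaulay case, which also subsumes the non-normal case (an isolated non-normal point of dimension $\ge 3$ fails Serre's condition $S_2$, hence is non-Cohen--Macaulay and automatically non-rational). Here $(V,0)$ may satisfy $p_g=0$ and still be non-rational, with the obstruction living in an intermediate local cohomology group $H^j_{\{0\}}(\mathscr{O}_V)\cong(R^j\pi_*\mathscr{O}_M)_0$ for some $1\le j\le n-2$ --- e.g. the cone over a bielliptic surface with a sufficiently ample polarization. In this case all holomorphic $n$-forms on $M\setminus A$ extend to $M$, so one must produce a holomorphic $n$-form on $M$ lying outside $\langle\Lambda^n\Gamma(M\setminus A,\Omega^1_M)\rangle$ that detects this cohomology. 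My plan would be to run the filtered-complex/spectral-sequence formalism of Section 2 on $M\setminus A$: compare the holomorphic de Rham complex $\bigl(\Gamma(M\setminus A,\Omega^\bullet_M),d\bigr)$ with its wedge-generated subcomplex, and use Serre--Grothendieck duality on the link together with Grauert--Riemenschneider vanishing to identify $G^{(\Lambda^n 1)}(M\setminus A)$ with a space built from $\bigoplus_{j\ge 1}R^j\pi_*\mathscr{O}_M$, which is nonzero precisely when $(V,0)$ is non-rational. Making this identification precise --- in particular ruling out that the wedge subcomplex already surjects onto $\Gamma(M\setminus A,\Omega^n_M)$ when only the lower local cohomology is nonzero --- is the delicate point and is where I expect the bulk of the work to lie.
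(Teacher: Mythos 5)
Your first step --- that every $n$-fold wedge of holomorphic $1$-forms on $M\setminus A$ extends holomorphically across $A$ --- is the right key claim, but your justification for it is both harder and shakier than necessary. Writing a reflexive $1$-form as $\sum_i g_i\,(dz_i|_V)$ with \emph{global} coefficients on $V\setminus\{0\}$ requires the vanishing of $H^1(V\setminus\{0\},\mathscr{K})$ for the relation sheaf $\mathscr{K}=\ker(\mathscr{O}_V^{\oplus N}\to\Omega^1_{V\setminus 0})$, not of $H^1(V\setminus\{0\},\mathscr{O}_V)$; and the assertion that the Jacobian determinants cancel the poles of the individual factors along the components of $A$ is essentially the statement you are trying to prove, not an observation. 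The paper avoids all of this by quoting Greuel's extension theorem (\cite{Gr}, Proposition 2.3): for $n>2$ every holomorphic $1$-form on $V\setminus\{0\}$ pulls back to a form that extends holomorphically over all of $M$, so $\Gamma(M\setminus A,\Omega^1_M)=\Gamma(M,\Omega^1_M)$ and the wedge claim is immediate, with no Cohen--Macaulay or normality hypotheses needed.

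The genuine gap is the one you flag yourself: after the wedge claim you only obtain $p_g=0$, which yields rationality via Kempf's criterion only when $(V,0)$ is Cohen--Macaulay, and your treatment of the non-CM case (where $p_g=0$ is compatible with non-rationality, as in your bielliptic-cone example) is a research plan, not a proof. So the proposal as written does not establish the theorem. For what it is worth, the paper's own proof simply asserts that non-rationality produces a holomorphic $n$-form on $V\setminus\{0\}$ that does not extend to $M$, i.e.\ it tacitly identifies ``non-rational'' with ``$p_g>0$''; you have correctly isolated the point at which that identification itself requires an argument (or a restriction of the class of singularities, as in Theorem \ref{new3}, where normal Gorenstein $3$-folds with $p_g=0$ are handled by a separate geometric argument on the exceptional divisors). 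But identifying the difficulty is not the same as resolving it: as submitted, your proof is incomplete precisely where you say it is.
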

\begin{proof}
Suppose $\pi: M\rightarrow V$ be any resolution of the singularity
$0$ with $E$ as its exceptional set. By a result of Greuel
(\cite{Gr}, Proposition 2.3), for every holomorphic $1$ form $\eta$
on $V-0$, $\pi^*(\eta)$ can extends holomorphically to $M$.
Since $0$ is not rational, there exists a holomorphic $n$ form
$\omega$ on $V-0$ such that $\pi^*(\omega)$ can not extends
holomorphically to $M$. So $$g^{(\Lambda^n 1)}\ge
\dim\Gamma(M-E, \Omega_M^n)/\Lambda^n \Gamma(M,
\Omega_M^1)>0.$$
\end{proof}

\begin{theorem}\label{new3}
\emph{Let $V$ be a $3$-dimensional Stein space with $0$ as its only
normal Gorenstein singular point, then $g^{(\Lambda^3 1)}\ge 1$.}
\end{theorem}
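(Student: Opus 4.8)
We sketch the intended argument. A $3$--dimensional normal Gorenstein singularity that is not rational already has $g^{(\Lambda^3 1)}\ge 1$ by Theorem \ref{newn}, so we may assume $0$ is a rational Gorenstein, hence canonical Gorenstein, point. Fix a nowhere--vanishing generator $\Omega_V$ of the line bundle $\omega_V$ near $0$. Since $V$ is normal and Gorenstein, $\Gamma(V\setminus 0,\Omega^3_V)=\Gamma(V,\omega_V)=\mathcal{O}(V)\,\Omega_V$, so it is enough to show that $\Omega_V\notin\langle\Lambda^3\Gamma(V\setminus 0,\Omega^1_V)\rangle$.

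Suppose first that $0$ is not terminal. By Theorem \ref{ratGor} there is a partial resolution $\pi\colon\tilde V\to V$ with $\pi^{-1}(0)=\bigcup_j F_j$ a union of nonsingular rational or elliptic ruled surfaces, with $\tilde V$ having at worst terminal singularities, and with $K_{\tilde V}=\pi^*K_V$. The last equality makes $\pi^*\Omega_V$ a nowhere--vanishing generator of $\omega_{\tilde V}$, so its restriction to a chosen $F_j$ is a nonzero section of the line bundle $\omega_{\tilde V}|_{F_j}=\Lambda^3(\Omega^1_{\tilde V}|_{F_j})$. On the other hand, by Greuel's extension theorem (\cite{Gr}) every $\eta\in\Gamma(V\setminus 0,\Omega^1_V)$ pulls back to a holomorphic $1$--form on $\tilde V$, hence restricts to a section $\eta|_{F_j}$ of $\Omega^1_{\tilde V}|_{F_j}$; from the conormal sequence $0\to N^*_{F_j/\tilde V}\to\Omega^1_{\tilde V}|_{F_j}\to\Omega^1_{F_j}\to 0$ together with $h^0(F_j,\Omega^1_{F_j})\le 1$ (valid for rational and for elliptic ruled surfaces), all such $\eta|_{F_j}$ lie in a subsheaf of $\Omega^1_{\tilde V}|_{F_j}$ of generic rank at most two, namely $N^*_{F_j/\tilde V}$ enlarged by the image of a single extra section. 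Hence any wedge of three of them vanishes, so pulling a hypothetical identity $\Omega_V=\sum_k\eta^k_1\wedge\eta^k_2\wedge\eta^k_3$ back to $\tilde V$ and restricting the resulting sections of $\omega_{\tilde V}$ to $F_j$ yields $(\pi^*\Omega_V)|_{F_j}=0$, a contradiction. Thus $g^{(\Lambda^3 1)}(0)\ge 1$ when $0$ is not terminal.

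Now suppose $0$ is terminal. By Theorem \ref{cDV} it is an isolated $cDV$ point, so for a general hyperplane $H=\{h=0\}$ through $0$ the section $S:=V\cap H$ is a Du Val surface singularity; for general $H$, by Bertini and Seidenberg $S$ is moreover normal, smooth off $0$, and Gorenstein, with a nowhere--vanishing generator $\Omega_S$ of $\omega_S$. The plan is to show that a hypothetical $\Omega_V=\sum_k\eta^k_1\wedge\eta^k_2\wedge\eta^k_3$ descends, under the Poincar\'e residue $\mathrm{Res}_S(\,\cdot/h\,)$, to an expression of $\Omega_S$ as an element of $\langle\Lambda^2\Gamma(S\setminus 0,\Omega^1_S)\rangle$, contradicting $g^{(\Lambda^2 1)}(S,0)\ge 1$, which holds for rational double points by Theorem \ref{new inv} (they are quasi--homogeneous and admit minimal good resolutions). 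Concretely, choosing a holomorphic vector field $\xi$ near $S$ with $\xi(h)\equiv 1$ — which exists since, for general $h$, $h|_V$ has no critical point off $0$ near $0$ — one checks that $\mathrm{Res}_S(\omega/h)=(\iota_\xi\omega)|_S$ for holomorphic $3$--forms $\omega$; adjunction then makes $(\iota_\xi\Omega_V)|_S$ a unit multiple of $\Omega_S$, while
\[
\iota_\xi(\eta^k_1\wedge\eta^k_2\wedge\eta^k_3)=(\iota_\xi\eta^k_1)\,\eta^k_2\wedge\eta^k_3-(\iota_\xi\eta^k_2)\,\eta^k_1\wedge\eta^k_3+(\iota_\xi\eta^k_3)\,\eta^k_1\wedge\eta^k_2
\]
restricts on $S$ to a combination of wedges $\eta^k_i|_S\wedge\eta^k_j|_S$ of elements of $\Gamma(S\setminus 0,\Omega^1_S)$, with coefficients the restrictions to $S$ of the functions $\iota_\xi\eta^k_i$, which lie in $\mathcal{O}(S\setminus 0)=\mathcal{O}(S)$; this produces the desired expression for $\Omega_S$ and hence the contradiction.

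The step I expect to be the crux is this last residue reduction. A vector field $\xi$ with $\xi(h)\equiv 1$ is only clearly available on $V\setminus 0$, and its existence even there amounts to the vanishing of a first cohomology group that need not vanish; so one must instead argue that the germ at $0$ of $\mathrm{Res}_S\big(\tfrac1h\,\eta^k_1\wedge\eta^k_2\wedge\eta^k_3\big)$ genuinely lies in the $\mathcal{O}$--module $\langle\Lambda^2\Gamma(S\setminus 0,\Omega^1_S)\rangle$ rather than merely in the associated sheaf of germs. A robust way around this is to carry out the entire residue computation on a resolution $M\to V$, where $S$ is replaced by a smooth divisor and all forms are honest holomorphic forms, the only additional work being to track the vanishing orders of $\pi^*\Omega_V$ and of the individual wedges $\pi^*\eta^k_1\wedge\pi^*\eta^k_2\wedge\pi^*\eta^k_3$ along the exceptional divisors.
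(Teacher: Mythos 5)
Your reduction to the rational Gorenstein (hence canonical) case and your treatment of the non-terminal case are correct and are essentially the paper's Case ii: the paper passes to a full resolution and uses the sequence $0\to\Omega^1_{M_j}(\log E_j)(-E_j)\to\Omega^1_{M_j}\to\Omega^1_{E_j}\to 0$ together with $h^0(E_j,\Omega^1_{E_j})\le 1$ to force every element of $\Lambda^3\Gamma(M,\Omega^1_M)$ to vanish along the crepant divisor, while the pulled-back Gorenstein form does not; your rank count on $\Omega^1_{\tilde V}|_{F_j}$ via the conormal sequence is the same mechanism. (The ingredient $g^{(\Lambda^2 1)}\ge 1$ for Du Val points that you invoke later is also legitimately available from Theorem \ref{new inv} and the remark following it.)

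The genuine gap is in the terminal ($cDV$) case, and it is exactly the one you flag without closing. Locally near each point of $S\setminus 0$ the residue of $\tfrac1h\,\eta_1\wedge\eta_2\wedge\eta_3$ is $\sum\pm a_i\,\bar\eta_j\wedge\bar\eta_k$, but the coefficients $a_i$ depend on a local splitting of the conormal sequence (equivalently on a local $\xi$ with $\xi(h)=1$), and globalizing them over a full punctured neighborhood of $0$ in $S$ is obstructed by an $H^1$ of that punctured neighborhood which has no reason to vanish at an isolated singular point. This matters because $g^{(\Lambda^2 1)}(S,0)$ is computed from the $\mathcal{O}$-module generated by wedges of $1$-forms defined on an entire punctured neighborhood of $0$; knowing only that the residue lies, locally on $S\setminus 0$, in the subsheaf generated by the $\bar\eta_j\wedge\bar\eta_k$ does not place its germ at $0$ in that module (compare: a section of $\mathfrak{m}\,\mathcal{O}_S$ over $S\setminus 0$ is just a holomorphic function there, and its extension across $0$ need not lie in $\mathfrak{m}$). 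Your proposed repair, redoing the residue on a resolution and tracking vanishing orders along exceptional divisors, is not carried out, and once those orders are tracked one is led to a different argument — which is what the paper actually does: it writes the $cDV$ point as a hypersurface $f=0$ in $\mathbb{C}^4$, blows up once, checks that the Gorenstein form $s=dx_1\wedge dx_2\wedge dx_3/(\partial f/\partial x_0)$ pulls back with vanishing order exactly $1$ along the rational component $F$ of the exceptional set, and then applies the same log-sequence argument as in the crepant case to show that every element of $\Lambda^3\Gamma(M,\Omega^1_M)$ vanishes to order at least $2$ along $\pi^*F$. To complete your route you must either solve the globalization problem for $\xi$ (or for the coefficients $a_i$) or replace the terminal case by a discrepancy computation of this kind.
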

\begin{proof}
If $0$ is non-rational, then $g^{(\Lambda^3 1)}\ge 1$ by Theorem
\ref{newn}. So we only need to show that the result is true for
rational Gorenstein singularities. It is well known that rational
Gorenstein singularities are canonical (see \cite{Ko-Mo} Corollary
5.24).
%Let $\pi: M\rightarrow V$ be
%a resolution with $E=\cup E_i$ as its exceptional set, where each
%$E_i$ is the non-singular irreducible component. We can assume
%without loss of generality that the exceptional set $E$ is a divisor
%with normal crossings.
We are going to separate our argument into
two cases to finish the proof.
\begin{itemize}
\item [Case i]: If $0$ is terminal, by Theorem \ref{cDV}, $0$ is a cDV
singularity defined by $f(x_0,x_1,x_2,x_3)=0$. Take a typical
blowing-up $\sigma: V'\rightarrow V$ at $0$, the exceptional set,
i.e., the projectivised tangent cone is a subscheme of degree $2$ in
$\mathbb{P}^3$ whose irreducible and reduced component denoted by
$F$ is a nonsingular rational surface after desingularization.
Consider
\[s=\frac{dx_1\wedge dx_2\wedge dx_3}{\partial f/\partial x_0}.\]
A typical piece of the blowing-up of $\mathbb{C}^4$ at $0$ has
coordinates $y_0$, $y_1$, $y_2$ and $y_3$ with $x_0=y_0, x_1=y_0y_1,
x_2=y_0y_2, x_3=y_0y_3$, and in this piece the nonsingular proper
transform is given by $f(y_0, y_0y_1, y_0y_2, y_0y_3)/y_0^2$. Then
\[dx_i=y_idy_0+y_0dy_i, \quad i=1,2,3\]
and the vanishing order of $\partial f(y_0, y_0y_1, y_0y_2,
y_0y_3)/\partial y_0$ along $y_0$ is $1$. Then
\begin{equation}
\begin{split}
\sigma^*s&=\frac{\bigwedge_{i=1}^3(y_idy_0+y_0dy_i)}{\partial
f(y_0, y_0y_1, y_0y_2, y_0y_3)/\partial y_0}\\
 &=\frac{y_0^2\Theta(y_0,y_1,y_2,y_3)}{\partial
f(y_0, y_0y_1, y_0y_2, y_0y_3)/\partial y_0},
 \end{split}
 \end{equation}
 where
\begin{equation}
\begin{split}
 &\Theta(y_0,y_1,y_2,y_3)\\
 =&y_1dy_0\wedge dy_2 \wedge dy_3+y_2 dy_1\wedge dy_0 \wedge dy_3+\\
 &y_3 dy_1\wedge dy_2 \wedge dy_0+y_0 dy_1\wedge dy_2 \wedge dy_3.
 \end{split}
 \end{equation}
So the vanishing order of $\sigma^*s$ along $F$ is $1$.

Let $\pi: M\rightarrow V'$ be a resolution consists of a series of
blowing-ups with $E=\cup E_i$ as the exceptional set of $\pi \circ
\sigma$, where each $E_i$ is the non-singular irreducible component.
We can assume without loss of generality that the exceptional set
$E$ is a divisor with normal crossings. So $(\pi \circ\sigma)^*s\in
\Gamma(M, \Omega_M^3)$ vanishes along some $E_j=\pi^*F\subseteq E$
of order $1$, which is a nonsingular rational surface, i.e.
Ord$_{E_j} (\pi \circ\sigma)^*s=1$. Take a tubular neighborhood
$M_j$ of $E_j$ such that $M_j\subset M$. Consider the exact sequence
(\cite{E-V})
\begin{equation}\label{logEj}
0\rightarrow \Omega_{M_j}^1(\log E_j)(-E_j)\rightarrow
\Omega_{M_j}^1\rightarrow \Omega_{E_j}^{1}\rightarrow 0.
\end{equation}
By taking global sections we have
\begin{equation}\label{loggEj}
0\rightarrow \Gamma({M_j}, \Omega_{M_j}^1(\log E_j)(-E_j))\rightarrow
\Gamma({M_j}, \Omega_{M_j}^1)\rightarrow \Gamma(E_j,
\Omega_{E_j}^{1}).
\end{equation}
Since $E_j$ is a nonsingular rational surface, $h^1(E_j,
\mathcal{O}_{E_j})$, the irregularity of $E_j$, is $0$. Then
$\Gamma(E_j, \Omega_{E_j}^{1})=0$ by Hodge symmetry. Therefore
\begin{equation}\label{equal}
\Gamma(M_j, \Omega_{M_j}^1(\log E_j)(-E_j))=\Gamma(M_j,
\Omega_{M_j}^1)
\end{equation}
from (\ref{loggEj}).

Suppose $\eta\in \Gamma(M_j, \Omega_{M_j}^1)$, then $\eta\in
\Gamma(M_j, \Omega_{M_j}^1(\log E_j)(-E_j))$ by (\ref{equal}). Chose
a point $P$ in $E_j$ which is a smooth point in $E$. Let $(x_1, x_2,
x_3)$ be a coordinate system center at $P$ such that $E_j$ is given
locally by $x_1=0$. Write $\eta$ locally around $P$ : $\eta\circeq
f_1dx_1+f_2x_1dx_2+f_3x_1dx_3$, where $f_1$, $f_2$ and $f_3$ are
holomorphic functions and ``$\circeq$" means local equality around
$P$. So the vanishing order of any elements in $\Lambda^3\Gamma(M,
\Omega_M^1)$ along the irreducible exceptional set $E_j$ is at least
$2$ by noticing $\Gamma(M, \Omega_M^1)\subseteq \Gamma(M_j,
\Omega_{M_j}^1)$ under natural restriction. So
\[g^{(\Lambda^3 1)}=\dim\Gamma(M, \Omega_M^3)/\Lambda^3 \Gamma(M,
\Omega_M^1)\ge 1.\]

\item [Case ii]: If $0$ is canonical but not terminal, then by
Theorem \ref{ratGor}, there exists a partial resolution
$\rho:\tilde{V}\rightarrow V$ such that $\rho^{-1}\{0\}=\cup_i F_i$
is a union of nonsingular rational or elliptic ruled surfaces,
$\tilde{V}$ only has terminal singularities and
$K_{\tilde{V}}=\rho^*K_V$. So if we let $\pi: M\rightarrow
\tilde{V}$ be a resolution consists a series of blowing-ups, then
the discrepancy of some $\pi^*F_j$ for $V$ is $0$. Therefore there
is a section $s\in \Gamma(M, \Omega_M^3)$ such that $s$ does not
vanish along some irreducible exceptional set $E_j:=\pi^*F_j$, i.e.
Ord$_{E_j} s=0$. Take a tubular neighborhood $M_j$ of $E_j$ such
that $M_j\subset M$. Consider the same exact sequence as in Case i:
\begin{equation}
0\rightarrow \Omega_{M_j}^1(\log E_j)(-E_j)\rightarrow
\Omega_{M_j}^1\rightarrow \Omega_{E_j}^{1}\rightarrow 0.
\end{equation}
By taking global sections we have
\begin{equation}\label{rep}
0\rightarrow \Gamma({M_j}, \Omega_{M_j}^1(\log E_j)(-E_j))\rightarrow
\Gamma({M_j}, \Omega_{M_j}^1)\rightarrow \Gamma(E_j,
\Omega_{E_j}^{1}).
\end{equation}
We know that $E_j$ must be a rational surface or elliptic ruled
surface. If $E_j$ is rational, we have $h^1(E_j,
\mathcal{O}_{E_j})$, the irregularity of $E_j$, is $0$. Then
$\Gamma(E_j, \Omega_{E_j}^{1})=0$ by Hodge symmetry. Therefore
\begin{equation}\label{equalEj}
\Gamma(M_j, \Omega_{M_j}^1(\log E_j)(-E_j))=\Gamma(M_j,
\Omega_{M_j}^1)
\end{equation}
from (\ref{rep}). Then by the same local argument as in Case i and
$\Gamma(M, \Omega_M^1)\subseteq \Gamma(M_j, \Omega_{M_j}^1)$, we
have
\[g^{(\Lambda^3 1)}=\dim\Gamma(M,
\Omega_M^3)/\Lambda^3 \Gamma(M, \Omega_M^1)\ge 1.\]

If $E_j$ is elliptic ruled surface, the only difference is that
$h^1(E_j, \mathcal{O}_{E_j})$, the irregularity of $E_j$, is $1$.
Then $\Gamma(E_j, \Omega_{E_j}^{1})=1$ by Hodge symmetry. Therefore
\begin{equation}\label{1dim}
\dim\Gamma(M_j, \Omega_{M_j}^1)/\Gamma(M_j, \Omega_{M_j}^1(\log
E_j)(-E_j))\le 1
\end{equation}
from (\ref{loggEj}).

If we take three $\mathbb{C}$-linear independent holomorphic
$1$-forms in $\Gamma(M, \Omega_M^1)$, then there must exist two
elements $\eta_1, \eta_2 \in \Gamma(M, \Omega_M^1)$ such that
$\eta_1|_{M_j}, \eta_2|_{M_j}\in \Gamma(M_j, \Omega_{M_j}^1(\log
E_j)(-E_j))$ from (\ref{1dim}). Similarly, chose a point $P$ in
$E_j$ which is a smooth point in $E$. Let $(x_1, x_2, x_3)$ be a
coordinate system center at $P$ such that $E_j$ is given locally by
$x_1=0$. Write $\eta_1$ and $\eta_2$ locally around $P$ :
$$\eta_1\circeq f_1dx_1+f_2x_1dx_2+f_3x_1dx_3$$ and
\[\eta_2\circeq
g_1dx_1+g_2x_1dx_2+g_3x_1dx_3,\] where $f_i$ and $g_i$ ($1 \le i\le
3$) are holomorphic functions and ``$\circeq$" means local equality
around $P$. So the vanishing order of any elements in
$\Lambda^3\Gamma(M_j, \Omega_{M_j}^1)$ along the irreducible
exceptional set $E_j$ is at least $1$. So by noticing $\Gamma(M,
\Omega_M^1)\subseteq \Gamma(M_j, \Omega_{M_j}^1)$, we have
\[g^{(\Lambda^3 1)}=\dim\Gamma(M, \Omega_M^3)/\Lambda^3 \Gamma(M,
\Omega_M^1)\ge 1.\]
\end{itemize}
\end{proof}

\section{\textbf{The classical complex Plateau problem}}
In 1981, Yau [Ya] solved the classical complex Plateau problem for
the case $n\ge 3$.
\begin{theorem}([Ya])\label{Yau Pla}
\emph{Let $X$ be a compact connected strongly pseudoconvex $CR$
manifold of real dimension $2n-1$, $n\ge 3$, in the boundary of a
bounded strongly pseudoconvex domain $D$ in $\mathbb{C}^{n+1}$. Then
$X$ is the boundary of a complex sub-manifold $V\subset D-X$ if and
only if Kohn--Rossi cohomology groups $H^{p, q}_{K R}(X)$ are zeros
for $1\le q\le n-2$}
\end{theorem}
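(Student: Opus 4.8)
The plan is to use the Harvey--Lawson theorem to replace $X$ by the unique Stein variety $V\subset\mathbb{C}^{n+1}$ it bounds, and then to translate the vanishing of $H^{p,q}_{KR}(X)$ into a statement about $V$ and, ultimately, about its singular points. Since $V$ is a hypersurface in $\mathbb{C}^{n+1}$, its (at most isolated) singularities $\Sigma=\{0_1,\dots,0_k\}$ are Gorenstein, and since $n\ge 3$ the variety $V$ is automatically normal. Fix a resolution $\pi:(M,A)\rightarrow (V,\Sigma)$ with $\partial M=X$ and $A=\bigcup_i A_i$. The assertion ``$X$ bounds a complex sub-manifold $V\subset D-X$'' means exactly $\Sigma=\emptyset$, so the theorem to prove is: $V$ is smooth $\iff$ $H^{p,q}_{KR}(X)=0$ for all $p$ and all $1\le q\le n-2$.

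The first main step is the bridge between the Kohn--Rossi cohomology of $X$ and the holomorphic geometry of the interior, generalizing Lemma~\ref{boundary} (which is the case of cohomological degree $0$) to higher degree. Combining strong pseudoconvexity of $X=\partial M$ (to extend $\bar\partial$-closed forms to a one-sided collar), Andreotti--Grauert finiteness for the $1$-convex manifold $M$, Tanaka's identification $H^{p,q}_{KR}(X)\cong H^q(X,E^p)$, and the Andreotti--Hill/Kohn--Rossi exact sequence relating the $\bar\partial$-cohomology of $M$, of a slightly larger pseudoconvex manifold, and the $\bar\partial_b$-cohomology of $X$, one obtains for $1\le q\le n-2$ a natural isomorphism
\[
H^{p,q}_{KR}(X)\cong H^q(M\setminus A,\Omega^p_M);
\]
the bound $q\le n-2$ is precisely the range in which the two neighbouring terms of that sequence vanish. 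Since $M\setminus A\cong V\setminus\Sigma$ and $V$ is Stein with isolated singular locus $\Sigma$ (so $H^{\ge 1}(V,\Omega^p_V)=0$), the local cohomology sequence identifies the right-hand side with $\bigoplus_i H^{q+1}_{\{0_i\}}(V,\Omega^p_V)$, a finite-dimensional invariant local to each singular point.

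The forward implication is now immediate: if $V$ is smooth then $M\setminus A=V$ is a Stein manifold, hence $H^q(V,\Omega^p_V)=0$ for every $q\ge 1$ and the bridge yields $H^{p,q}_{KR}(X)=0$ for $1\le q\le n-2$. For the converse, assume $H^{p,q}_{KR}(X)=0$ for all $p$ and all $1\le q\le n-2$; the bridge forces $H^{j}_{\{0_i\}}(V,\Omega^p_V)=0$ for every $i$, every $p$, and every $2\le j\le n-1$, and it remains to prove the purely local statement: \emph{an isolated Gorenstein hypersurface singularity $(V^n,0)$ with $n\ge 3$ for which $H^{j}_{\{0\}}(V,\Omega^p_V)=0$ for all $p$ and all $2\le j\le n-1$ is smooth.} For this I would pass to the Milnor fibration: the link is $\partial F$ with $F$ the Milnor fibre, which is homotopy equivalent to a bouquet of $\mu$ copies of $S^n$, so $\widetilde H^r(F;\mathbb{C})=0$ for $r\ne n$ and $\widetilde H^n(F;\mathbb{C})\cong\mathbb{C}^\mu$; then, feeding Steenbrink's mixed Hodge structure on $H^n(F)$ into the residue/Gysin comparison that expresses the Hodge and weight graded pieces of $H^n(F)$ through the local invariants $H^{j}_{\{0\}}(V,\Omega^p_V)$ with $j\le n-1$, one deduces that if the singularity is not smooth, i.e. $\mu>0$, then at least one of these invariants in the range $2\le j\le n-1$ is nonzero, a contradiction. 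This is where $n\ge 3$ is used: the band must be nonempty and wide enough to detect a nonzero Hodge piece of $H^n(F)$, whereas for $n=2$ it is empty, which is why the three-dimensional case needs the finer invariant $g^{(1,1)}$ of \cite{Du-Ya}.

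The main obstacle is this last local step — converting the vanishing of the middle-range local cohomology groups $H^{j}_{\{0\}}(V,\Omega^p_V)$, $2\le j\le n-1$, into triviality of the vanishing cohomology of $F$ (hence $\mu=0$, hence smoothness). Setting up the bridge in the second step is technical but is essentially standard Andreotti--Grauert and $\bar\partial_b$ theory together with the Tanaka formalism recalled in Section~2; the genuine content, and the heart of Yau's argument, is the Hodge-theoretic input on isolated hypersurface singularities that turns cohomological vanishing into smoothness.
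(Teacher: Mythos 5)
First, a point of comparison: the paper does not prove Theorem \ref{Yau Pla} at all --- it is quoted from Yau's 1981 Annals paper [Ya] and used as a black box, so there is no in-paper argument to measure you against. Measured against the actual proof in [Ya], your skeleton is the right one: Harvey--Lawson produces the Stein hypersurface $V$ with isolated (hence normal Gorenstein) singularities; the bridge $H^{p,q}_{KR}(X)\cong H^q(M\setminus A,\Omega^p_M)\cong H^q(V\setminus\Sigma,\Omega^p)$ for $1\le q\le n-2$, obtained from Tanaka's identification, one-sided extension across the strictly pseudoconvex boundary, Andreotti--Grauert and the Kohn--Rossi exact sequence, is indeed Yau's main technical step; the forward implication from Steinness of a smooth $V$ is as you say; and the converse does reduce to a local non-vanishing statement at each singular point.

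The genuine gap is in that local step, which you correctly single out as the heart of the matter but then only gesture at. The assertion that Steenbrink's mixed Hodge structure on $H^n(F)$ can be ``expressed through'' the groups $H^{j}_{\{0\}}(V,\Omega^p_V)$ with $2\le j\le n-1$ is not a standard statement and is not justified: the Hodge filtration on the vanishing cohomology is computed from the Brieskorn lattice $\Omega^{n+1}/df\wedge d\Omega^{n-1}$, which lives in top local cohomological degree, not in the band $j\le n-1$, and your argument never identifies which pair $(p,j)$ in the band is forced to be nonzero when $\mu>0$. What actually does the job, and is in the spirit of what [Ya] does, is elementary: restrict the conormal sequence $0\to\mathcal{O}_V\xrightarrow{\,df\,}\Omega^1_{\mathbb{C}^{n+1}}|_V\to\Omega^1_V\to 0$ to $V\setminus\{0\}$ and take cohomology. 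Since $\mathcal{O}_{V,0}$ is Cohen--Macaulay of dimension $n$, one has $H^{n-2}(V\setminus\{0\},\mathcal{O})\cong H^{n-1}_{\{0\}}(V,\mathcal{O})=0$ while $H^{n-1}(V\setminus\{0\},\mathcal{O})\cong H^{n}_{\mathfrak m}(\mathcal{O}_{V,0})$, whence
\[
H^{n-2}(V\setminus\{0\},\Omega^1)\;\cong\;\bigl(0:_{H^{n}_{\mathfrak m}(\mathcal{O}_{V,0})}(\partial_0 f,\dots,\partial_n f)\bigr)\;\cong\;\bigl(\mathcal{O}_{V,0}/(\partial_0 f,\dots,\partial_n f)\bigr)^{\vee}
\]
by Matlis/local duality (using that a hypersurface is Gorenstein); its dimension is the Tjurina number $\tau$, which is positive exactly when $0$ is singular. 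Hence $H^{1,n-2}_{KR}(X)\neq 0$ as soon as some $0_i$ is singular, and $q=n-2\ge 1$ lies in the admissible range precisely when $n\ge 3$ --- this, rather than a width count of Hodge pieces, is where the hypothesis $n\ge 3$ enters and why the case $n=2$ requires the finer invariant $g^{(1,1)}$. Without some such concrete identification your converse is not proved.
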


When $n=2$, the Plateau problem remains unsolved for many years even
there are no any criterion to judge whether $X$ is the boundary of a
complex manifold. In \cite{Du-Ya}, the first and the third authors
used $CR$ invariant $g^{(1,1)}(X)$ to give the sufficient and
necessary condition for the variety bounded by a Calabi-Yau CR
manifold $X$ being smooth if $H_h^2{(X)}=0$.

\begin{theorem}(\cite{Du-Ya})
\emph{Let $X$ be a strongly pseudoconvex compact Calabi-Yau $CR$
manifold of dimension $3$. Suppose that $X$ is contained in the
boundary of a strongly pseudoconvex bounded domain $D$ in
$\mathbb{C}^N$ with $H_h^2{(X)}=0$. Then $X$ is the boundary of a
complex sub-manifold (up to normalization) $V\subset D-X$ with
boundary regularity if and only if $g^{(1,1)}(X)=0$.}
\end{theorem}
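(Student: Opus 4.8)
The plan is to reduce the statement, via the Harvey--Lawson solution and the localization of the invariant, to the positivity assertion of Theorem~\ref{new inv}. First I would invoke the Harvey--Lawson theorem: $X$ bounds a Stein variety $V\subset\mathbb C^N$ with at most finitely many isolated singular points, with $X=\partial V$ attained with boundary regularity. Passing to the normalization $\nu:\widetilde V\to V$ changes neither the boundary nor the regularity there and keeps $\widetilde V$ Stein, so I may assume the singularities $0_1,\dots,0_k$ of $\widetilde V$ are normal two-dimensional surface singularities; fix a resolution $\pi:(M,A=\cup_i A_i)\to(\widetilde V,0_1,\dots,0_k)$. Since here $n=2$, we have $g^{(1,1)}(X)=g^{(\Lambda^2 1)}(X)$ in the notation of Section~4.

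Next I would establish the $n=2$ analogues of Lemma~\ref{boundary} and Lemma~\ref{gp1}: because $X=\partial M$ is strictly pseudoconvex, every holomorphic form in $\mathscr S^q(X)$ extends to a one-sided neighbourhood of $X$ and hence, by Andreotti--Grauert \cite{An-Gr}, to $M\setminus A$, which gives $g^{(1,1)}(X)=g^{(1,1)}(M\setminus A)$; and the computation in the proof of Lemma~\ref{loc inv2} localizes this to $g^{(1,1)}(X)=\sum_{i=1}^k g^{(1,1)}(0_i)$ with every summand $\ge 0$. Hence $g^{(1,1)}(X)=0$ if and only if $g^{(1,1)}(0_i)=0$ for every $i$, and the theorem reduces to the assertion that $\widetilde V$ is smooth if and only if all $g^{(1,1)}(0_i)=0$.

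The implication ``$\widetilde V$ smooth $\Rightarrow g^{(1,1)}(X)=0$'' is then immediate: if there are no singular points then $M\setminus A\cong V$ is a smooth Stein surface, where the sheaf map $\Lambda^2\Omega^1\to\Omega^2$ is an isomorphism, so by Cartan~B the map $\Lambda^2\Gamma(V,\Omega^1)\to\Gamma(V,\Omega^2)$ is surjective and $g^{(1,1)}(X)=0$. For the converse, suppose $g^{(1,1)}(X)=0$ but some $0_i$ is genuinely singular. The Calabi--Yau hypothesis provides a nowhere-vanishing holomorphic section of $\wedge^2\widehat T(X)^*$; extending it as in Lemma~\ref{boundary}, and arguing as for the remark following the definition of a Calabi--Yau CR manifold, forces each $0_i$ to be Gorenstein. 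I would then exploit $H_h^2(X)=0$ --- which, read through the holomorphic De Rham complex on the link, says that every holomorphic $2$-form on $M\setminus A$ is $d\eta$ for some holomorphic $1$-form $\eta$ --- to identify each $0_i$ with the germ of a Gorenstein surface singularity admitting a good $\mathbb C^*$-action, thereby placing it in the situation of Theorem~\ref{new inv} (and the Remark following it). Theorem~\ref{new inv} then gives $g^{(1,1)}(0_i)\ge 1$ at each actual singular point, so $g^{(1,1)}(X)=\sum_i g^{(1,1)}(0_i)\ge k\ge 1$, a contradiction; hence $\widetilde V$ is smooth, and regularity of $V$ along $X$ is already supplied by Harvey--Lawson.

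The hard part will be precisely this last identification: extracting from the vanishing of the CR-invariant $H_h^2(X)$ together with the Calabi--Yau structure on $X$ the concrete structural fact that the isolated singularities of $\widetilde V$ are quasi-homogeneous Gorenstein --- the class to which Theorem~\ref{new inv} applies. All the rest is formal manipulation of the sheaves $\bar{\bar\Omega}_V^{\Lambda^2 1}$, the boundary-extension argument of Lemma~\ref{boundary}, and standard Stein and Andreotti--Grauert theory; the genuine content lies in translating holomorphic De Rham cohomology of the link into the geometry of the singularities it bounds.
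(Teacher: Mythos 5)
This statement is quoted from \cite{Du-Ya}; the present paper gives no proof of it, so the only in-paper comparison point is the parallel argument for Theorem \ref{5d}. Your formal skeleton does match that template exactly: Harvey--Lawson plus \cite{Lu-Ya} for existence and boundary regularity, normalization to reduce to normal isolated surface singularities, the extension/localization arguments of Lemmas \ref{boundary}, \ref{loc inv2} and \ref{gp1} to get $g^{(1,1)}(X)=\sum_i g^{(1,1)}(0_i)$, and the easy forward implication. All of that is fine.

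The genuine gap is the step you yourself flag as ``the hard part'': you never show that each singular point falls under the hypotheses of Theorem \ref{new inv}, i.e.\ that it is a normal surface singularity \emph{with $\mathbb{C}^*$-action}. The Calabi--Yau hypothesis does give Gorenstein (a nowhere-vanishing section of $\wedge^2\widehat T(X)^*$ extends by Andreotti--Grauert to trivialize $K$ off the singular set), but your claim that $H^2_h(X)=0$ upgrades this to quasi-homogeneity is asserted, not proved, and it is precisely the mathematical content of the theorem --- without it the whole proof collapses to ``apply Theorem \ref{new inv}'', which you are not entitled to do. Note also why this cannot be routine: for $n\ge 3$ the paper's Theorem \ref{newn} kills non-rational singularities by Greuel's extension theorem for $1$-forms, but that argument fails for surfaces, where $q=\dim\Gamma(M\setminus A,\Omega^1)/\Gamma(M,\Omega^1)$ can be positive; the hypothesis $H^2_h(X)=0$ exists exactly to compensate for this failure in the non-rational case. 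A reduction more consistent with the tools actually available would be: (i) use $H^2_h(X)=0$ (every holomorphic $2$-form on $M\setminus A$ is $d$ of a holomorphic $1$-form) together with $g^{(1,1)}(0_i)=0$ to force each $0_i$ to be rational --- this is the surface substitute for Theorem \ref{newn} and is the step requiring real work; and (ii) observe that rational Gorenstein surface singularities are exactly the rational double points, which are quasi-homogeneous, so Theorem \ref{new inv} (or the positivity for rational singularities recorded in the Remark after it, though that is from the later paper \cite{Du-Ga}) yields $g^{(1,1)}(0_i)\ge 1$, a contradiction. As written, your proposal leaves step (i) --- or your stronger unsubstantiated variant of it --- entirely open, so the backward implication is not established.
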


\begin{theorem}(\cite{Du-Ya})\label{h2}
\emph{Let $X$ be a strongly pseudoconvex compact $CR$ manifold of
dimension $3$. Suppose that $X$ is contained in the boundary of a
strongly pseudoconvex bounded domain $D$ in $\mathbb{C}^3$ with
$H_h^2{(X)}=0$. Then $X$ is the boundary of a complex sub-manifold
$V\subset D-X$ if and only if $g^{(1,1)}(X)=0$ .}
\end{theorem}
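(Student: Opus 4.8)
The plan is to transfer the problem to the singularities of the variety that $X$ bounds, and then argue the two implications separately. By the Harvey--Lawson theorem, $X$ bounds a Stein variety $V\subseteq\mathbb{C}^3$ with at most isolated singular points $0_1,\dots,0_k$; since $V$ is a hypersurface in $\mathbb{C}^3$ it is Cohen--Macaulay and, having only isolated singularities, normal, and each germ $(V,0_i)$ is a normal \emph{Gorenstein} surface singularity (so, in contrast with the general $\mathbb{C}^N$ situation, no passage to a normalization is needed). Fixing a resolution $\pi\colon (M,A=\bigcup_i A_i)\to(V,\{0_1,\dots,0_k\})$, I would first record, exactly as in Lemmas \ref{boundary} and \ref{gp1} in the case $n=p=2$, that $g^{(1,1)}(X)=g^{(1,1)}(M\backslash A)=\sum_i g^{(1,1)}(0_i)$, so that everything reduces to showing $g^{(1,1)}(X)=0$ if and only if $k=0$.

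For the ``only if'' direction I would use the hypothesis $H^2_h(X)=0$. If $V$ is a complex submanifold then $M=V$ and $A=\emptyset$, and Lemma \ref{boundary} identifies $\mathscr{S}^q(X)$ with $\Gamma(V,\Omega^q_V)$ for $q=1,2$ compatibly with $d$. On the smooth Stein surface $V$ every holomorphic $1$-form is a global combination $\sum_j a_j\,dz_j$ of differentials of the coordinate functions, hence $d\,\Gamma(V,\Omega^1_V)\subseteq\langle\Lambda^2\Gamma(V,\Omega^1_V)\rangle$; and every holomorphic $2$-form on a surface is $d$-closed. Therefore $g^{(1,1)}(X)=\dim\Gamma(V,\Omega^2_V)/\langle\Lambda^2\Gamma(V,\Omega^1_V)\rangle$ is a quotient of $\Gamma(V,\Omega^2_V)/d\,\Gamma(V,\Omega^1_V)\cong H^2_h(V)\cong H^2_h(X)=0$, where the first isomorphism is the holomorphic De Rham description of $H^2$ on a Stein manifold and the second comes again from extension of holomorphic forms across the strongly pseudoconvex boundary. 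Hence $g^{(1,1)}(X)=0$.

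For the ``if'' direction, assume $g^{(1,1)}(X)=0$, so $g^{(1,1)}(0_i)=0$ for every $i$; I would show a genuine singular point cannot occur. If $(V,0_i)$ is not rational, then by Greuel's theorem (\cite{Gr}) every holomorphic $1$-form on $V\backslash 0_i$ pulls back holomorphically to $M$, while some holomorphic $2$-form does not, so the argument in the proof of Theorem \ref{newn} (which goes through verbatim when $n=2$) gives $g^{(1,1)}(0_i)\ge p_g(0_i)\ge 1$, a contradiction. If $(V,0_i)$ is rational, then being Gorenstein it is a rational double point, hence canonical and crepant on its minimal resolution, whose exceptional divisor is a configuration of smooth rational curves; here I would copy the local computation of Case i in the proof of Theorem \ref{new3}, one dimension down. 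The pullback of a generator of the invertible sheaf $\bar{\bar{\Omega}}^2_V$ lies in $\Gamma(M,\Omega^2_M)$ and is nowhere vanishing along some exceptional curve $E_j\cong\mathbb{P}^1$ (crepancy); on a tubular neighbourhood $M_j$ of $E_j$, the exact sequence $0\to\Omega^1_{M_j}(\log E_j)(-E_j)\to\Omega^1_{M_j}\to\Omega^1_{E_j}\to0$ together with $\Gamma(E_j,\Omega^1_{E_j})=0$ forces every global holomorphic $1$-form to restrict into $\Omega^1_{M_j}(\log E_j)(-E_j)$, so every element of $\langle\Lambda^2\Gamma(M,\Omega^1_M)\rangle$ vanishes along $E_j$; the Gorenstein form does not, and so $g^{(1,1)}(0_i)\ge 1$, a contradiction. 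Hence $k=0$ and $V$ is a complex submanifold of $D-X$.

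The step I expect to be the main obstacle is the rational Gorenstein case above: one must know that the pulled-back Gorenstein form is genuinely non-vanishing along at least one exceptional curve, which rests on the crepancy $K_M=\pi^*K_V$ of the minimal resolution of a rational double point, and on the rationality of the exceptional curves killing $\Gamma(E_j,\Omega^1_{E_j})$. This is precisely the two-dimensional shadow of Case i of Theorem \ref{new3} and the same mechanism underlying Theorem \ref{new inv} for the quasi-homogeneous (ADE) case; a clean way to package it is to invoke Theorem \ref{new inv} directly, since rational double points carry a $\mathbb{C}^*$-action. Note finally that the hypothesis $H^2_h(X)=0$ intervenes only in the forward implication, where it is needed to bound $g^{(1,1)}(X)$ in the smooth case.
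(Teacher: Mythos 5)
First, note that this paper does not actually prove Theorem \ref{h2}: it is quoted from \cite{Du-Ya}, so there is no in-paper argument to compare yours against; but the paper's own analogous arguments (Theorems \ref{newn}, \ref{new3}, \ref{5d}) show where your proposal runs into trouble. Your forward implication is correct but roundabout: once you know that every holomorphic $1$-form on the smooth Stein surface $V$ is a global combination $\sum_j a_j\,dz_j$, the same Cartan A/B argument applied to $2$-forms gives $\Gamma(V,\Omega^2_V)=\langle\Lambda^2\Gamma(V,\Omega^1_V)\rangle$ directly, i.e. $g^{(1,1)}(X)=0$ without any appeal to $H^2_h(X)=0$; this is exactly how the forward direction of Theorem \ref{5d} is handled via Lemma \ref{gp1}.

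The genuine gap is in the backward implication, in the non-rational case. You assert that the proof of Theorem \ref{newn} ``goes through verbatim when $n=2$'', i.e. that every holomorphic $1$-form on $V\setminus 0_i$ extends holomorphically across the exceptional set, whence $\langle\Lambda^2\Gamma(M\setminus A,\Omega^1_M)\rangle\subseteq\Gamma(M,\Omega^2_M)$ and $g^{(1,1)}(0_i)\ge p_g\ge 1$. But the extension result from \cite{Gr} invoked in Theorem \ref{newn} applies to $p$-forms with $p\le n-2$, which is precisely why that theorem is stated only for $n>2$: for a surface singularity the cokernel of $\Gamma(M,\Omega^1_M)\to\Gamma(M\setminus A,\Omega^1_M)$ is the irregularity $q$ of Definition \ref{s inv}, which need not vanish, and even when individual $1$-forms extend only meromorphically their wedges need not lie in $\Gamma(M,\Omega^2_M)$. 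If your one-line inequality $g^{(1,1)}\ge p_g$ were available for surface singularities, the separate paper \cite{Du-Ga2} on minimally elliptic singularities (all of which have $p_g=1$) and the $\mathbb{C}^*$-action theorem (Theorem \ref{new inv}) would be superfluous, and \cite{Du-Ya} would not have needed the hypothesis $H^2_h(X)=0$ at all --- whereas you use that hypothesis only in the (easy) forward direction, while in \cite{Du-Ya} it is exactly what rescues the backward direction for singularities not covered by the quasi-homogeneous case. Your treatment of the rational case is fine provided you invoke Theorem \ref{new inv} (rational double points are weighted homogeneous, hence carry a $\mathbb{C}^*$-action); the direct ``Case i''-style computation you sketch suffers from the same unproved extension step, since the relevant $1$-forms live a priori only on $M\setminus A$ and not on the tubular neighborhood $M_j$.
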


We will use the new $CR$ invariant $g^{(\Lambda^n 1)}(X)$ to deal
with complex Plateau problem of $X$ in general type.

\begin{theorem}\label{rat}
\emph{Let $X$ be a strongly pseudoconvex compact $CR$ manifold of
dimension $2n-1$, where $n>2$. Suppose that $X$ is contained in the
boundary of a strongly pseudoconvex bounded domain $D$ in
$\mathbb{C}^N$. Then $X$ is the boundary of a variety $V\subset D-X$
with boundary regularity and the number of non-rational
singularities is not great than $g^{(\Lambda^n 1)}(X)$. In
particular,  if $g^{(\Lambda^n 1)}(X)=0$, then $V$ has at most
finite number of rational singularities.}
\end{theorem}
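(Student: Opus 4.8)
The proof will combine the Harvey–Lawson existence theorem with the local result of Lemma \ref{gp1} and the positivity result Theorem \ref{newn}. First I would invoke the Harvey–Lawson theorem (stated in the introduction): since $X$ is a strongly pseudoconvex compact $CR$ manifold of dimension $2n-1$ sitting in the boundary of a strongly pseudoconvex bounded domain $D\subset\mathbb C^N$, it bounds a Stein variety $V\subset D-X$ with at most finitely many isolated singularities $0_1,\dots,0_k$, and the boundary regularity of $V$ along $X$ is part of that theorem. Let $\pi:(M,A_1,\dots,A_k)\to(V,0_1,\dots,0_k)$ be a resolution of singularities with $A_i=\pi^{-1}(0_i)$ and $A=\cup_i A_i$.

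Next I would apply Lemma \ref{gp1}, which identifies the global $CR$ invariant with a sum of local contributions:
\[
g^{(\Lambda^n 1)}(X)=\sum_{i=1}^{k} g^{(\Lambda^n 1)}(0_i)=\sum_{i=1}^{k}\dim\mathscr G^{(\Lambda^n 1)}_{0_i}.
\]
Now partition the singular points into the non-rational ones and the rational ones. For each non-rational singular point $0_i$, Theorem \ref{newn} (applied to the Stein germ $(V,0_i)$, which is $n$-dimensional with $n>2$) gives $g^{(\Lambda^n 1)}(0_i)\ge 1$. For the rational singular points the corresponding summand is nonnegative. Hence
\[
g^{(\Lambda^n 1)}(X)=\sum_i g^{(\Lambda^n 1)}(0_i)\ \ge\ \#\{\,i : 0_i\text{ is non-rational}\,\},
\]
which is exactly the asserted bound on the number of non-rational singularities. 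In particular if $g^{(\Lambda^n 1)}(X)=0$ then every summand vanishes, so by Theorem \ref{newn} no singular point can be non-rational; since $V$ a priori has only finitely many isolated singularities, $V$ has at most finitely many singularities, all rational, giving the final clause.

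The only genuine content is making sure the hypotheses of the cited results are actually met. The main point to check is that the Stein germs $(V,0_i)$ are legitimate inputs to Theorem \ref{newn}: they are $n$-dimensional (since $V$ has pure dimension $n$), they have $0_i$ as their only singular point after localizing, and $n>2$ by hypothesis — so this is immediate. One should also note that Theorem \ref{newn} as stated needs the germ to be a Stein space with an isolated singularity, which follows from Harvey–Lawson (the variety is Stein and has isolated singularities), and that Lemma \ref{gp1} is precisely designed to split the boundary invariant into these local pieces via Lemma \ref{boundary}. The expected obstacle is therefore not a deep one: it is simply the bookkeeping of passing from the ambient variety $V$ to the Stein germs at each $0_i$ and confirming boundary regularity is supplied by Harvey–Lawson; there is no analytic estimate to run here since all the heavy lifting was done in Theorem \ref{newn} and Lemma \ref{gp1}.
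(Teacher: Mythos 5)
Your proposal is correct and follows essentially the same route as the paper: Harvey--Lawson (with the Lu--Yau addendum) for existence and boundary regularity, Lemma \ref{gp1} to decompose $g^{(\Lambda^n 1)}(X)$ into the local invariants $g^{(\Lambda^n 1)}(0_i)$, and Theorem \ref{newn} to give $g^{(\Lambda^n 1)}(0_i)\ge 1$ at each non-rational point. Your write-up is in fact more explicit about the bookkeeping than the paper's two-line proof.
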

\begin{proof}
It is well known that $X$ is the boundary of a variety $V$ in $D$ with
boundary regularity (\cite{Lu-Ya}, \cite{Ha-La2}). Suppose $\{0_1,
\cdots 0_k\}$ be $k$ non-rational singularities in $V$.  The the
result follows easily from Theorem \ref{newn} and Lemma \ref{gp1}.
\end{proof}

When $X$ is a Calabi--Yau $CR$ manifold of dimension $5$, we give
the following necessary and sufficient condition for the variety
bounded by $X$ being smooth.
\begin{theorem}\label{5d}
\emph{Let $X$ be a strongly pseudoconvex compact Calabi-Yau $CR$
manifold of dimension $5$. Suppose that $X$ is contained in the
boundary of a strongly pseudoconvex bounded domain $D$ in
$\mathbb{C}^N$. Then $X$ is the boundary of a complex sub-manifold
(up to normalization) $V\subset D-X$ with boundary regularity if and
only if $g^{(\Lambda^3 1)}(X)=0$.}
\end{theorem}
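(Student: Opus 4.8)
The plan is to reduce the statement to Theorem \ref{rat} and Theorem \ref{new3} together with the Harvey--Lawson realization of $X$ as the boundary of a Stein variety $V\subseteq D$ with isolated singularities, and boundary regularity from \cite{Lu-Ya} and \cite{Ha-La2}. The key observation is that since $X$ is a compact strongly pseudoconvex \emph{Calabi--Yau} $CR$ manifold of dimension $5$ (so $n=3$), the nowhere vanishing holomorphic section of $\wedge^3\widehat{T}(X)^*$ extends, by the same one-sided extension argument as in Lemma \ref{boundary} (using Andreotti--Grauert \cite{An-Gr}), to a nowhere vanishing holomorphic $3$-form on $V$ away from its singular points; equivalently each singular germ $(V,0_i)$ is Gorenstein. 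Thus each $0_i$ is a $3$-dimensional normal Gorenstein singularity. This is the step where the Calabi--Yau hypothesis is actually used, and it is the analogue of the Remark following the definition of Calabi--Yau $CR$ manifolds in Section 2.

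First I would argue the ``only if'' direction: if $V$ (or its normalization) is smooth, then $A=\varnothing$ in Lemma \ref{boundary}, so $G^{(\Lambda^3 1)}(X)=G^{(\Lambda^3 1)}(M\setminus A)$ with $M=V$ a manifold, and every holomorphic $3$-form is, locally and hence globally by the holomorphic de Rham / Tanaka identification of $\mathscr{S}^p(X)$ with holomorphic cross sections of $\wedge^p(\widehat{T}(X)^*)$, a sum of wedges of holomorphic $1$-forms on a Stein manifold; more precisely on the smooth Stein $V$ the sheaf $\bar{\bar{\Omega}}^3_V=\Omega^3_V$ is generated over $\mathcal{O}_V$ by $\Lambda^3\Omega^1_V$, so $\mathscr{G}^{(\Lambda^3 1)}=0$ and $g^{(\Lambda^3 1)}(X)=0$.

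For the ``if'' direction, suppose $g^{(\Lambda^3 1)}(X)=0$. By Lemma \ref{gp1}, $\sum_i g^{(\Lambda^3 1)}(0_i)=0$, hence $g^{(\Lambda^3 1)}(0_i)=0$ for each $i$. By Theorem \ref{rat} none of the $0_i$ is non-rational, so every $0_i$ is a rational singularity; and since each $0_i$ is Gorenstein by the Calabi--Yau argument above, each $0_i$ is a $3$-dimensional normal Gorenstein singularity. But Theorem \ref{new3} asserts that any such singularity has $g^{(\Lambda^3 1)}\ge 1$, contradicting $g^{(\Lambda^3 1)}(0_i)=0$ unless there is no singular point at all. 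Hence $V$ has no singularities (after normalization), i.e.\ $X$ bounds a complex submanifold up to normalization, with boundary regularity. The main obstacle is entirely packaged in Theorem \ref{new3} --- establishing the sharp lower bound $g^{(\Lambda^3 1)}\ge 1$ for $3$-dimensional normal Gorenstein (equivalently canonical) singularities via the case division into terminal ($cDV$, Theorem \ref{cDV}) and canonical-non-terminal (Reid's partial resolution, Theorem \ref{ratGor}) cases; given that theorem, the proof here is a short bookkeeping argument combining Lemmas \ref{boundary}, \ref{gp1} and Theorems \ref{rat}, \ref{new3}.
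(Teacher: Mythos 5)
Your proof is correct and follows essentially the same route as the paper: Harvey--Lawson together with the boundary-regularity addendum for the existence of $V$, Lemma \ref{gp1} to localize $g^{(\Lambda^3 1)}(X)$ at the singular points, and Theorem \ref{new3} as the key lower bound, with the Calabi--Yau hypothesis supplying the Gorenstein condition on each singular germ (a step the paper leaves implicit). The only difference is cosmetic: your detour through Theorem \ref{rat} to first rule out non-rational singularities is redundant, since Theorem \ref{new3} already covers both the non-rational and the rational Gorenstein cases.
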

\begin{proof}
$(\Rightarrow)$ : Since $V$ is smooth, $g^{(\Lambda^3 1)}(X)=0$ follows from
Lemma \ref{gp1}.

$(\Leftarrow)$ : It is well known that $X$ is the boundary of a
variety $V$ in $D$ with boundary regularity (\cite{Lu-Ya},
\cite{Ha-La2}). The the result follows easily from Theorem
\ref{new3} and Lemma \ref{gp1}.
\end{proof}

\begin{corollary}
\emph{Let $X$ be a strongly pseudoconvex compact Calabi-Yau $CR$
manifold of dimension $5$. Suppose that $X$ is contained in the
boundary of a strongly pseudoconvex bounded domain $D$ in
$\mathbb{C}^4$. Then $X$ is the boundary of a complex sub-manifold
$V\subset D-X$ with boundary regularity if and only if
$g^{(\Lambda^3 1)}(X)=0$.}
\end{corollary}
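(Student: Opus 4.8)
The plan is to obtain this corollary as the special case $N=4$ of Theorem \ref{5d}, the only extra input being that for a hypersurface the ``up to normalization'' caveat is vacuous. So I would first invoke Theorem \ref{5d} to reduce to checking that the Harvey--Lawson variety $V$ bounded by $X$ is already normal when it sits in $\mathbb{C}^4$, and then spell out the short self-contained argument below for completeness.

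The direction $(\Rightarrow)$ is immediate: if $V\subset D-X$ is a complex submanifold with $\partial V=X$, then $V$ has no singular points, so Lemma \ref{gp1} (with an empty set of singularities) gives $g^{(\Lambda^3 1)}(X)=0$.

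For $(\Leftarrow)$, recall from Harvey--Lawson together with the boundary regularity of \cite{Lu-Ya}, \cite{Ha-La2} that $X$ bounds a Stein variety $V\subset\mathbb{C}^4$ of complex dimension $3$ with at most finitely many isolated singularities $\{0_1,\dots,0_k\}$ and with boundary regularity along $X$. Since $V$ has codimension one in $\mathbb{C}^4$ it is a hypersurface, hence Cohen--Macaulay, and since its singular locus is isolated (so of codimension $\ge 2$) Serre's criterion shows $V$ is normal; thus the normalization of $V$ is $V$ itself. Moreover each germ $(V,0_i)$ is a $3$-dimensional normal Gorenstein (indeed hypersurface) singularity, so Theorem \ref{new3} gives $g^{(\Lambda^3 1)}(0_i)\ge 1$, and Lemma \ref{gp1} yields
\[
g^{(\Lambda^3 1)}(X)=\sum_{i=1}^{k} g^{(\Lambda^3 1)}(0_i)\ge k .
\]
Hence $g^{(\Lambda^3 1)}(X)=0$ forces $k=0$, i.e.\ $V$ is a complex submanifold of $D-X$ with $\partial V=X$ and boundary regularity, which is the assertion.

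The only step that needs a word of care is the normality of $V$, which is exactly what removes the word ``normalization'' appearing in Theorem \ref{5d}; after that the result is a direct combination of Theorem \ref{new3} and Lemma \ref{gp1}, and I do not expect any genuine obstacle. (Alternatively, one can cite Theorem \ref{5d} verbatim and then remark that the normalization map $\widetilde V\to V$ is an isomorphism because a hypersurface with isolated singularities is normal, so the ``complex sub-manifold up to normalization'' produced there is a genuine complex submanifold of $D-X$.)
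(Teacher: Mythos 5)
Your proposal is correct and follows essentially the same route as the paper, whose entire proof is the observation that isolated hypersurface singularities are normal and Gorenstein, so that Theorem \ref{new3} and Lemma \ref{gp1} (equivalently, Theorem \ref{5d} with the normalization caveat removed) apply directly. Your added details --- Cohen--Macaulayness plus Serre's criterion for normality, and the resulting identification of $V$ with its normalization --- merely spell out what the paper leaves implicit.
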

\begin{proof}
The result follows easily from the fact that isolated hypersurface
singularities are normal and Gorenstein.
\end{proof}

%\end{comment}

\end{document}